\titlespacing*{\chapter}{0pt}{3.5ex plus 1ex minus .2ex}{2.3ex plus .2ex}
\newtheorem{thm}{Theorem} 
\newtheorem{lemma}{Lemma}
\newtheorem{thmm}{Theorem}
\newtheorem{definition}{Definition}
\theoremstyle{nonumberplain}
\newtheorem{proof}{Proof}
\theoremstyle{empty}
\newtheorem{refproof}{Proof}
\newcommand{\R}{\mathbb{R}}
\newcommand{\N}{\mathbb{N}}
\newcommand{\Z}{\mathbb{Z}}
\newcommand{\na}{(\{n\alpha\})_{n \geq 1}}
\newcommand{\ui}{[0,1)}
\newcommand{\ab}{[a,b)}
\newcommand{\xn}{(x_n)_{n \geq 1}}
\newcommand{\lc}{_{n\geq 1}}
\newcommand{\an}{(a_n)_{n \geq 1}}
\newcommand{\ana}{(\{a_n\alpha\})_{n \geq 1}}
\newcommand{\abm}{[\bm{a},\bm{b})}
\begin{document}
	\title{On Bounded Remainder Sets and \\ Strongly Non-Bounded Remainder Sets for Sequences $(\{a_n\alpha\})_{n\geq 1}$ }
	\author{Lisa Kaltenböck\footnote{The author is supported by the Austrian Science Fund (FWF), Project F5507-N26, which is a part of the Special Research Program Quasi-Monte Carlo Methods: Theory and Applications.}, Gerhard Larcher\footnote{The author is supported by the Austrian Science Fund (FWF), Project F5507-N26, which is a part of the Special Research Program Quasi-Monte Carlo Methods: Theory and Applications and Project I1751-N26.}}
	\date{\vspace{-5ex}}
	\maketitle
	
\begin{abstract}
	We give some results on the existence of bounded remainder sets (BRS) for sequences of the form $(\{a_n\alpha\})_{n\geq 1}$, where $(a_n)_{n\geq 1}$ - in most cases - is a given sequence of distinct integers. Further we introduce the concept of strongly non-bounded remainder sets (S-NBRS) and we show for a very general class of polynomial-type sequences that these sequences cannot have any S-NBRS, whereas for the sequence $(\{2^n\alpha\})_{n \geq 1}$ every interval is an S-NBRS.
\end{abstract}

\section{Introduction and Statement of Results}
Throughout this paper we will use $\{x\}$ to denote the fractional part of a number $x$ and $\|x\| = \min(\{x\},1-\{x\})$ to denote the difference between $x$ and the nearest integer.
\begin{definition}
	\label{def:ud}
	An infinite sequence $(x_n)_{n \geq 1}$ in $[0,1)$ is said to be uniformly distributed modulo 1 if for every interval $[a,b) \subseteq [0,1)$ we have	
	\begin{equation*}
	\lim\limits_{N \rightarrow \infty}\frac{\#\{1 \leq n \leq N: x_n \in [a,b)\}}{N} = b-a.
	\end{equation*}
\end{definition}
A most common measure for the quality of the uniform distribution of a sequence is the discrepancy $D_N$ of the sequence:

\begin{definition}
	For a sequence $(x_n)_{n \geq 1}$ in $[0,1)$ and any subinterval $[a,b) \subseteq [0,1)$, let 
	\begin{equation*}
	D_N([a,b)) = \frac{1}{N}\left(\#\{1 \leq n \leq N: x_n \in [a,b)\} - N(b-a)\right)
	\end{equation*}
	be the ($N$-th) discrepancy function of $[a,b)$.
	
	Then, the ($N$-th) discrepancy of $(x_n)$ is given by 
	\begin{equation*}
		D_N = \sup_{[a,b)} |D_N([a,b))|.
	\end{equation*}
\end{definition}

A famous result of W. M. Schmidt \cite{SchmidtVII} gives the following general lower bound for the discrepancy of an arbitrary sequence in $[0,1)$:
\begin{thmm}[Schmidt]
	There exists a constant $c > 0$ such that for any infinite sequence $\xn$ in $\ui$ it holds that
	\begin{equation*}
	D_N \geq c \frac{\log N}{N}
	\end{equation*}
	for infinitely many $N$.
\end{thmm}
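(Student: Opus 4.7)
The plan is to follow Halász's Riesz-product approach to Schmidt's theorem. The high-level idea is to construct, for each large $N$, a nonnegative trigonometric polynomial $R_N$ on $[0,1)$ that probes irregularities of $(x_n)$ at many dyadic scales simultaneously, and then to compare two estimates for $\frac{1}{N}\sum_{n=1}^N R_N(x_n)$: a combinatorial lower bound of order $1 + c\log N$ and a Fourier-analytic upper bound of order $1 + C(\log N)\cdot N D_N$. The resulting inequality then forces $D_N \gg \log N/N$ at the chosen $N$, which suffices since $N$ is arbitrary.

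First, I would invoke Koksma's inequality in the form $\bigl|\sum_{n=1}^N e^{2\pi i h x_n}\bigr| \ll |h|\, N D_N$, and then refine it through a smooth majorant (for instance a Beurling--Selberg polynomial approximating the indicator of $[a,b)$) so that the $|h|$ factor is replaced by a quantity that does not blow up across the spectrum of $R_N$. Next, I would construct a lacunary Riesz product
\begin{equation*}
R_N(x) = \prod_{j=1}^J \bigl(1 + \tfrac{1}{2}\cos(2\pi\cdot 2^j x - \phi_j)\bigr)
\end{equation*}
with $J \asymp \log N$ and phases $\phi_j = \phi_j(x_1,\ldots,x_N)$ still to be chosen. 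The lacunarity of $\{2^j\}$ forces the Fourier support of $R_N$ to consist of distinct integers of the form $h = \sum_j \varepsilon_j 2^j$ with $\varepsilon_j \in \{-1,0,+1\}$, with the coefficient at such an $h$ bounded by $4^{-k}$ where $k$ is the number of nonzero $\varepsilon_j$. In particular $R_N \geq 0$ and $\int_0^1 R_N = 1$.

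The heart of the argument is the inductive phase-selection: I would pick $\phi_j$ so that $\mathrm{Re}\bigl(\frac{1}{N}\sum_n e^{2\pi i(2^j x_n - \phi_j)}\bigr) \geq 0$ for each $j$. A product expansion then shows that these positive contributions accumulate across scales, yielding
\begin{equation*}
\frac{1}{N}\sum_{n=1}^N R_N(x_n) \geq 1 + cJ.
\end{equation*}
Expanding $R_N$ into exponentials and applying the Koksma-type bound term by term produces the matching upper estimate $1 + C D_N\cdot F(J)$ for a modest function $F$. Setting $J \asymp \log N$ so that $F(J) \cdot \log N \ll N$ forces $D_N \gg \log N/N$ at this $N$, and since $N$ is arbitrary the bound holds for infinitely many $N$.

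The principal obstacle is calibrating the upper bound so that the $\ell^1$-sum of the Fourier coefficients of $R_N$, weighted by the $|h|$-cost in Koksma's inequality, does not swamp the combinatorial gain $cJ$. This is precisely where the lacunarity of $\{2^j\}$ is essential, because it prevents collisions between expansion terms and enables a one-sided trigonometric approximation of $\chi_{[a,b)}$ (via Beurling--Selberg majorants/minorants) whose spectrum is compatible with that of $R_N$. A second, subtler difficulty is verifying that the greedy phase choice actually propagates into a constructive product expansion; the cross-terms must not cancel the diagonal gain, and this again follows from lacunarity but requires careful book-keeping of signs.
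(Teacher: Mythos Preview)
The paper does not prove this statement; Theorem~A is quoted as a classical result of Schmidt and simply cited to \cite{SchmidtVII}. There is therefore no proof in the paper to compare your proposal against.

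Your sketch is in the spirit of Hal\'asz's alternative argument, but the crucial lower-bound step is wrong as written. You assert that choosing phases $\phi_j$ with $\mathrm{Re}\bigl(\tfrac{1}{N}\sum_n e^{2\pi i(2^j x_n-\phi_j)}\bigr)\ge 0$ forces
\[
\frac{1}{N}\sum_{n=1}^{N} R_N(x_n)\ \ge\ 1+cJ .
\]
This fails in general: if the sequence happens to be very well distributed at the scales $2^j$, every exponential sum $\tfrac{1}{N}\sum_n e^{2\pi i\,2^j x_n}$ is tiny, so the first-order terms in the product expansion are tiny and $\tfrac{1}{N}\sum_n R_N(x_n)$ is close to $\int_0^1 R_N=1$, not to $1+cJ$. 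Nonnegativity of each first-order contribution gives no lower bound on their sum, and your phase choice does nothing for the higher-order cross terms. In short, pairing $R_N$ against the empirical measure $\tfrac{1}{N}\sum_n\delta_{x_n}$ cannot produce the required gain.

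The genuine Hal\'asz argument pairs the Riesz product against the \emph{discrepancy function} $\alpha\mapsto N D_N([0,\alpha))$ (equivalently, one passes to the two-dimensional point set $\{(n/N,x_n):1\le n\le N\}$ and uses Roth's orthogonal-function machinery). The lower bound of order $J$ then comes from the staircase structure of this function---each scale contributes a fixed positive amount after a suitable sign choice, and lacunarity makes the scales essentially orthogonal. The upper bound is simply
\[
\Bigl|\int_0^1 N D_N([0,\alpha))\,R_N(\alpha)\,d\alpha\Bigr|\ \le\ \|N D_N\|_\infty\,\|R_N\|_{L^1}\ =\ N D_N,
\]
since $R_N\ge 0$ and $\int R_N=1$; no Koksma inequality, no Beurling--Selberg majorants, and no total-variation factor are needed. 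Your proposed upper bound via Koksma would instead cost the variation of $R_N$, which is of order $2^J$, and would erase any gain even if the lower bound had been correct.
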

This result, however, does not contain any lower bound for the absolute value $|D_N([a,b))|$ of the discrepancy function of a concrete fixed interval $[a,b)$. Such a - essentially best possible - bound was given by Tijdeman and Wagner in \cite{Tijdeman1980}.
\begin{thmm}[Tijdeman, Wagner]
	For every sequence $\xn$ we have for almost all $b \in \ui$ that
	\begin{equation*}
	|D_N([0,b))| \geq \frac{1}{400}\frac{\log N}{N}
	\end{equation*}
	for infinitely many $N$.
\end{thmm}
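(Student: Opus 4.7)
The plan is to argue by contradiction, in the spirit of Schmidt's proof of the sup-discrepancy bound, but with an additional Lebesgue-density reduction that allows us to work on a fixed positive-measure set of ``test endpoints'' $b$ where the normalised discrepancy is assumed to be uniformly small. I set $f_N(b) := A(N,b) - Nb$, where $A(N,b) = \#\{1 \leq n \leq N : x_n < b\}$, so that $f_N(b) = N \cdot D_N([0,b))$. Between consecutive members of $\{x_1,\ldots,x_N\}$ the function $f_N$ is affine with slope $-N$ and it has a jump of $+1$ at each $x_n$; this rigid piecewise-linear structure is the geometric input we will try to violate.

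Concretely, I would assume that the conclusion fails. Then there exist a measurable set $E \subseteq \ui$ with $|E|>0$ and an integer $N_0$ with
\[
|f_N(b)| < \frac{\log N}{400} \quad \text{for every } b \in E \text{ and every } N \geq N_0.
\]
By the Lebesgue density theorem I can pass to a compact subset $E_0 \subseteq E$ contained in some small arc $I_0$ of length $\ell_0$ such that $E_0$ has density arbitrarily close to $1$ in every sub-arc of $I_0$. The key observation is then that for any $b < b'$ in $E_0$,
\[
\bigl|\#\{n \leq N : x_n \in [b,b')\} - N(b'-b)\bigr| = |f_N(b')-f_N(b)| < \frac{\log N}{200},
\]
so the integer-valued counts over arbitrarily many sub-intervals of $I_0$ with endpoints in $E_0$ are simultaneously pinned to values very close to their expected counts.

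The core of the argument is then a counting step modelled on Schmidt's original combinatorial scheme. Choose $N$ so large that $1/N \ll \ell_0$ and pick a ``ladder'' $b_0 < b_1 < \cdots < b_M$ of density points of $E_0$ at roughly dyadic spacings, from $\ell_0$ down to a scale comparable to $1/N$. The smallness hypothesis produces, for every pair of indices, a near-integer constraint on the count over the corresponding sub-interval. Iterating these constraints across $O(\log N)$ dyadic scales over-determines the distribution of $x_1,\ldots,x_N$ inside $I_0$, and a pigeonhole/telescoping argument then produces at least one sub-interval on which the actual count must deviate from its expected value by at least $\log N/400$, contradicting the hypothesis.

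I expect the main obstacle to be precisely this last step: recovering the explicit constant $1/400$ requires meticulous bookkeeping across the $O(\log N)$ dyadic scales, and any over-estimate at a single scale multiplies with the others and degrades the constant. The delicate part is therefore to engineer the ladder $(b_j)$ so that the smallness hypothesis can be iterated across all scales without loss, matching the geometry of $E_0$ inside $I_0$ with the scale $1/N$. This is essentially a refinement of Schmidt's combinatorial argument, specialised to intervals of the form $[0,b)$ and sharpened just enough to force the exceptional $b$-set to have measure zero rather than merely fail to be all of $\ui$.
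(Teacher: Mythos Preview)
This theorem is not proved in the paper; it is stated in the introduction as a known background result and attributed to Tijdeman and Wagner \cite{Tijdeman1980}. There is therefore no ``paper's own proof'' to compare your proposal against.

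As for the proposal itself: your outline points in a plausible direction --- the contradiction via a positive-measure exceptional set, the Lebesgue density reduction, and a Schmidt-type multiscale counting argument are indeed the ingredients one expects --- but what you have written is a strategy rather than a proof. The decisive step, namely producing from the simultaneous near-integer constraints an interval on which the deviation must exceed $\tfrac{1}{400}\log N$, is asserted but not carried out; you yourself flag it as the main obstacle. In Schmidt's sup-discrepancy argument the contradiction comes from a specific auxiliary function and a careful choice of scales, and the Tijdeman--Wagner refinement to ``almost all $b$'' requires additional work beyond a density reduction (in particular, handling the interaction between the fixed set $E_0$ and the varying scale $1/N$ uniformly in $N$). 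Without that combinatorial core made explicit, the proposal does not yet constitute a proof.
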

The other extremal case are sets $[a,b)$ of bounded remainder. 
\begin{definition}
	Let $(x_n)_{n\geq 1}$ be a sequence in $[0,1)$ and let $[a,b) \subseteq [0,1)$. $[a,b)$ is called bounded remainder set (BRS) if there exists some constant $c$ such that
	\begin{equation*}
	|D_N([a,b))| \leq c \frac{1}{N}
	\end{equation*}
	for all $N \in \N$.
\end{definition}
Note that for all $\ab$, $|D_N([a,b))| \geq \frac{1}{4N}$ for infinitely many $N$.
It was shown by Schmidt \cite{SchmidtVI} that for every sequence $(x_n)_{n \geq 1}$ in $[0,1)$ the set of intervals $[0,b)$ which are BRS is countable. 

For some of the most well-known and commonly studied sequences $(x_n)_{n \geq 1}$ in $[0,1)$ the bounded remainder sets are explicitly known. For example for the one-dimensional Kronecker sequence $(\{n\alpha\})_{n \geq 1}$, where $\alpha$ is a fixed irrational, by Kesten \cite{Kesten1966} the following was shown:
\begin{thmm}[Kesten]
	\label{thm:Kesten}
	An interval $\ab \subseteq \ui$ is a BRS for the sequence $\na$ if and only if
	\begin{equation*}
	b - a = \{j\alpha\}
	\end{equation*}
	for some $j \in \Z$.
\end{thmm}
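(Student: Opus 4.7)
The plan is to prove the two directions separately: the sufficiency $(\Leftarrow)$ that $b-a=\{j\alpha\}$ forces $\ab$ to be a BRS, and the necessity $(\Rightarrow)$ that a BRS must have length in $\{\{j\alpha\}\}_{j \in \Z}$.

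For the sufficiency, I would reduce the BRS property to a \emph{coboundary equation}. Concretely, the goal is to exhibit a bounded function $f:\ui \to \R$ satisfying
\begin{equation*}
\chi_{\ab}(x) - (b-a) = f(\{x+\alpha\}) - f(x)
\end{equation*}
for all $x \in \ui$ (or almost all). Once such an $f$ is in hand, the sum $N \cdot D_N(\ab) = \sum_{n=1}^{N}(\chi_{\ab}(\{n\alpha\}) - (b-a))$ telescopes to $f(\{(N+1)\alpha\}) - f(\{\alpha\})$, and is therefore bounded by $2\|f\|_\infty$. Given the hypothesis $b-a = \{j\alpha\}$, I would build $f$ explicitly as a finite signed combination of indicator functions of the iterated translates $[\{a+i\alpha\},\{a+i\alpha\}+(b-a))$ for $i=0,1,\dots,j-1$, chosen so that the $\alpha$-shift of $f$ differs from $f$ by precisely the required function. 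A parallel route is to invoke the three-distance theorem applied to the convergent denominators $q_m$ of the continued fraction of $\alpha$ and to combine this with the Ostrowski representation $N=\sum_m c_m q_m$ in order to estimate the counting error directly.

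For the necessity, I would turn to Fourier analysis. Expanding
\begin{equation*}
\chi_{\ab}(x) = (b-a) + \sum_{k \neq 0} c_k\, e^{2\pi i k x}, \qquad c_k = \frac{e^{-2\pi i k a} - e^{-2\pi i k b}}{2\pi i k},
\end{equation*}
the normalised discrepancy becomes a weighted combination of Weyl sums $S_N(k):=\sum_{n=1}^{N} e^{2\pi i k n \alpha}$ satisfying $|S_N(k)| \leq \min(N,(2|\sin(\pi k \alpha)|)^{-1})$. The decisive diophantine input is that for the convergent denominators $q_m$ of $\alpha$ one has $\|q_m\alpha\|$ extremely small, so $|S_{q_m}(q_m)|$ is as large as order $q_m$. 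If $\ab$ were a BRS, the absence of blow-up would force $|c_{q_m}|=\lO(1/q_m)$, i.e.\ $|\sin(\pi q_m(b-a))|=\lO(1/q_m)$, equivalently $\|q_m(b-a)\|=\lO(1/q_m)$ for all large $m$. A three-distance / Ostrowski analysis applied to the real number $b-a$ then forces $b-a \in \Z\alpha+\Z$ modulo $1$, which is the desired conclusion $b-a \equiv \{j\alpha\}$.

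The main obstacle will be the necessity direction. Sufficiency fits a well-known coboundary template and, once $f$ is written down correctly, needs only a routine verification. Necessity, by contrast, requires careful handling of potential cancellation among the many Fourier modes and a nontrivial diophantine classification argument to pin the length $b-a$ down to the countable set $\{\{j\alpha\} : j \in \Z\}$. A more conceptual alternative is ergodic-theoretic: one interprets the BRS condition as the existence of a measurable solution to the cohomological equation for the irrational rotation by $\alpha$, and invokes the classification of its measurable coboundaries, whose eigenvalues $\{e^{2\pi i j \alpha}\}_{j\in\Z}$ encode exactly the admissible lengths.
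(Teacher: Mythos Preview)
The paper does not give its own proof of this statement. Theorem~\ref{thm:Kesten} is quoted as a classical result of Kesten with a reference to \cite{Kesten1966}; the surrounding text only remarks that Kesten's original argument was elementary and that ergodic/dynamical proofs and generalisations were later supplied by Furstenberg--Keynes--Shapiro \cite{Furstenberg1973}, Petersen \cite{Petersen1973}, and Oren \cite{Oren1982}. There is therefore no in-paper argument against which to compare your proposal.

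As for the proposal on its own merits: the sufficiency direction via an explicit coboundary is standard and will work once the transfer function $f$ is written down carefully. The necessity direction via Fourier analysis, however, has a genuine gap that you yourself flag but do not close: from boundedness of $N\,D_N(\ab)=\sum_{k\neq 0} c_k S_N(k)$ one cannot read off a bound on the single contribution $c_{q_m}S_{q_m}(q_m)$ without controlling the cancellation against all the other modes, and no mechanism for doing so is offered. The subsequent diophantine step (from $\|q_m(b-a)\|=\lO(1/q_m)$ for all large $m$ to $b-a\in\Z+\Z\alpha$) is likewise left as an assertion. The ergodic--cohomological route you mention at the end is in fact the one taken in the references the paper cites and is the reliable path to necessity; your Fourier sketch, as it stands, is not a proof.
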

Kesten has given an elementary proof. Proofs and generalizations of this result based on ergodic theory and topological dynamics were given for example by Furstenberg, Keynes and Shapiro \cite{Furstenberg1973}, Petersen \cite{Petersen1973} and Oren \cite{Oren1982}.

Theorem \ref{thm:Kesten} shows that the collection of BRS of $\na$ is "dense" in $\ui$, meaning that for all $\varepsilon > 0$ and all $x,y \in \ui$ there is a BRS $\ab$ with $|a - x| < \varepsilon$ and $|b - y| < \varepsilon$. We say: \textit{the sequence has a dense collection of BRS}.

A deep result based on the techniques from ergodic theory was given by Liardet \cite{Liardet1987} in the case of "polynomial Kronecker sequences" of higher degree:
\begin{thmm}[Liardet]
	\label{thm:Liardet}
	Let $p(x) \in \R[x]$ of degree $d \geq 2$ and with irrational leading coefficient. Then the only BRS of the sequence $(\{p(n)\})\lc$ are the empty set and $\ui$.	
\end{thmm}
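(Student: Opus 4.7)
\noindent\emph{Proof proposal.} The strategy is to realize $(\{p(n)\})_{n \geq 1}$ as the orbit of a single point under a uniquely ergodic unipotent affine transformation $T$ of $\mathbb{T}^d$, and then extract a contradiction from any putative non-trivial BRS via a Fourier-theoretic coboundary argument. Writing $p(n) = \alpha n^d + (\text{lower order})$ with $\alpha$ irrational and setting $\gamma := d!\, \alpha$, the map
\[ T(x_1, \ldots, x_d) := (x_1 + x_2,\, x_2 + x_3,\, \ldots,\, x_{d-1} + x_d,\, x_d + \gamma) \]
has the property that $\pi_1(T^n x^{(0)})$ is a polynomial of degree $d$ in $n$ with leading coefficient $\alpha$; a suitable choice of $x^{(0)} \in \mathbb{T}^d$ arranges $\pi_1(T^n x^{(0)}) \equiv p(n) \pmod 1$ for all $n$. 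A character-by-character application of Weyl's theorem shows that $T$ is uniquely ergodic with respect to Haar measure.

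Suppose for contradiction that $[a,b)$ with $0 < b-a < 1$ is a BRS, and set $\phi := \mathbf{1}_{[a,b)} - (b-a)$. Then $\sum_{n=1}^N (\phi \circ \pi_1)(T^n x^{(0)}) = O(1)$, and an $L^2$-variant of the Gottschalk--Hedlund theorem adapted to the present affine skew product furnishes a function $g \in L^2(\mathbb{T}^d)$ satisfying $\phi \circ \pi_1 = g - g \circ T$ almost everywhere. Expanding in Fourier series and using the identity $e^{2\pi i \langle k, Tx\rangle} = e^{2\pi i k_d \gamma}\, e^{2\pi i \langle Mk, x\rangle}$, where $M = I + L$ and $L$ denotes the integer lower-shift matrix on $\Z^d$, this cohomological equation becomes the recursion
\[ \hat\phi(\ell_1)\, \delta_{\ell_2, 0} \cdots \delta_{\ell_d, 0} \;=\; \hat g(\ell) - e^{2\pi i \gamma (M^{-1}\ell)_d}\, \hat g(M^{-1}\ell), \qquad \ell \in \Z^d. \]

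Iterating this recursion pins down $\hat g$ almost completely. For $\ell \in \operatorname{span}(e_d) \setminus \{0\}$ one has $M^{-1}\ell = \ell$, and the irrationality of $\gamma$ forces $\hat g(\ell) = 0$. For any other $\ell$ with $\ell \notin \{(m, 0, \ldots, 0) : m \in \Z\}$ one has $L\ell \neq 0$, so the unipotence of $M$ makes the iterates $M^{-r}\ell$ grow polynomially in norm; combined with $|\hat g(\ell)| = |\hat g(M^{-r}\ell)|$ and Bessel's inequality on $g \in L^2$, this forces $\hat g(\ell) = 0$. For $\ell = (m, 0, \ldots, 0)$ with $m \neq 0$, $M^{-1}\ell$ falls into the previous case, giving $\hat g(M^{-1}\ell) = 0$ and hence $\hat g(\ell) = \hat\phi(m)$. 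Assembling, $g(x) = c + \phi(x_1)$ almost everywhere. Substituting back into the coboundary equation yields $\phi(x_1) = \phi(x_1) - \phi(x_1 + x_2)$, so $\phi(x_1 + x_2) = 0$ a.e., which forces $b - a \in \{0, 1\}$ and contradicts the assumption.

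The principal obstacle is the initial coboundary reduction: the classical Gottschalk--Hedlund theorem requires $\phi$ continuous and operates uniformly in $x$, whereas here $\phi$ is a discontinuous indicator and orbital boundedness is a priori known only at the single point $x^{(0)}$. One can overcome this by invoking the ergodic theory of distal / Anzai skew-product systems (the route taken in Liardet's original proof), or alternatively by bypassing the coboundary language altogether and extracting the Fourier recursion directly from a summation-by-parts analysis of the Weyl sums $\sum_{n \leq N} e^{2\pi i m p(n)}$, exploiting the unipotent algebraic structure of $T$.
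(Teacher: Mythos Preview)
The paper does not contain a proof of this statement; it is quoted as Liardet's theorem from the literature, and the authors explicitly remark that to their knowledge no elementary proof exists. There is thus no ``paper's proof'' to compare against, but your sketch can still be assessed on its own.

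The genuine gap is precisely the step you yourself flag as the ``principal obstacle'': obtaining $g\in L^2(\mathbb{T}^d)$ with $\phi\circ\pi_1=g-g\circ T$ from the hypothesis that $\sum_{n\le N}\phi(p(n))=O(1)$ along the single orbit of $x^{(0)}$. This is not a technicality but the entire depth of the theorem. Gottschalk--Hedlund does not apply ($\phi$ is discontinuous, and the bound is not uniform in the base point), and for a general uniquely ergodic system boundedness of ergodic sums at one point does \emph{not} force the function to be an $L^2$ coboundary. Your two proposed escapes --- ``invoke distal/Anzai structure'' or ``bypass via summation by parts on Weyl sums'' --- are programmes, not arguments; the first is essentially a pointer back to Liardet's own proof, and the second would have to manufacture the full Fourier recursion for $\hat g$ without ever constructing $g$, which you do not carry out. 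Once the coboundary equation is granted, your Fourier analysis is morally right, though there is a slip in case~2: the backward $M^{-1}$-orbit of some $\ell\notin\mathrm{span}(e_1)$ can hit $\mathrm{span}(e_1)$ (e.g.\ $d=2$, $M^{-1}(1,1)=(1,0)$), so $|\hat g(\ell)|=|\hat g(M^{-r}\ell)|$ is not valid for all $r$ and the conclusion $g=c+\phi\circ\pi_1$ is not justified as stated. The clean repair is to note that every infinite $M$-orbit on $\Z^d$ meets $\mathrm{span}(e_1)$ at most once, so $|\hat g|$ is constant on each of two infinite rays of the orbit; Bessel then kills $\hat g$ off the zero mode, and the recursion at $m e_1$ gives $\hat\phi(m)=0$ for $m\neq 0$ directly.
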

So for example the sequence $(\{n^2\alpha\})_{n \geq 1}$ or more generally a sequence $(\{f(n)\alpha\})_{n\geq 1}$ with $f(x) \in \Z[x]$ of degree $d \geq 2$, does not have any non-trivial BRS. To our best knowledge until now there is no elementary proof of this fact.
A further nice result was given by Tijdeman and Voorhoeve in \cite{Tijdeman1981}, where they in some sense classify a collection $S$ of intervals in $\ui$ such that there exists a sequence $\xn$ having all elements of $S$ as BRS.
The notion of BRS was also studied in higher dimensions (in general and for concrete sequences) and for continuous motions for example in \cite{Hellekalek1984}, \cite{Grepstad2014}, \cite{Liardet1987}, \cite{Rauzy}, \cite{Ferenczi} or \cite{GrepstadLarcher}.

Our investigations of this paper started when we were asked whether the sequence $\xn = (\{2^n\alpha\})\lc$ has non-trivial BRS or not.
This question can be answered very easily even in the more general case of Pisot numbers $\beta$ as bases.
Here, we restrict ourselves to $\alpha$ whose $\beta$-expansion is normal to base $\beta$ and thus for which can be guaranteed that $(\{\beta^n \alpha\})_{n \geq 1}$ is uniformly distributed, as was shown by Bertrand-Mathis \cite{Bertrand}. For the special case that $\beta > 1$ is an integer, the normality of $\beta$ and the uniform distribution of $(\{\beta^n \alpha\})_{n \geq 1}$ are equivalent.
\begin{thm}
	\label{thm:brs_normal_alpha}
	Let $\beta > 1$ be a Pisot number and let $\alpha$ be normal to base $\beta$. Then the sequence $(\{\beta^n \alpha\})_{n \geq 1}$ does not have non-trivial bounded remainder sets.
\end{thm}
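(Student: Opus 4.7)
\begin{refproof}
I would proceed by contradiction: suppose $[a,b)\subsetneq [0,1)$ with $0<b-a<1$ is a BRS.

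\textbf{Step 1: Pisot reduction.} The defining property of a Pisot number yields $\lambda\in(0,1)$ and integers $M_n$ with $|\beta^n-M_n|\le C\lambda^n$. Hence $\{\beta^n\alpha\}$ and $\{M_n\alpha\}$ agree modulo $1$ up to an error $|\alpha|C\lambda^n$. Since $\sum_n \lambda^n<\infty$ and $(\{\beta^n\alpha\})$ is uniformly distributed by Bertrand--Mathis, only finitely many $n$ can have $\mathbf{1}_{[a,b)}(\{\beta^n\alpha\})\ne \mathbf{1}_{[a,b)}(\{M_n\alpha\})$ (the wrap-around at the boundary of $[a,b)$ has total measure summable in $n$). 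Consequently $[a,b)$ is also a BRS for the integer-indexed sequence $(\{M_n\alpha\})_{n\ge 1}$.

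\textbf{Step 2: Linear recurrence.} Writing the minimal polynomial of $\beta$ as $P(x)=x^d+c_{d-1}x^{d-1}+\cdots+c_0\in\mathbb{Z}[x]$, the sequence $(M_n)$ satisfies $\sum_{i=0}^d c_i M_{n+i}=0$ (with $c_d=1$), so that
\[
\sum_{i=0}^d c_i\{M_{n+i}\alpha\}\in\mathbb{Z}\qquad\text{for all }n\ge 1.
\]

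\textbf{Step 3: Contradiction.} The BRS assumption, applied to each shifted sequence $(\{M_{n+j}\alpha\})_{n\ge 1}$ for $j=0,1,\ldots,d$, gives $\sum_{n\le N}\mathbf{1}_{[a,b)}(\{M_{n+j}\alpha\})=N(b-a)+O(1)$ with a constant independent of $j$. Combining these $d+1$ counts with the coefficients $c_j$ and invoking the identity of Step 2 should produce a relation of the form $P(1)\cdot N(b-a)=(\text{integer})+O(1)$. Since $\beta>1$ is Pisot, the number $1$ is not a conjugate of $\beta$, so $P(1)=1+c_{d-1}+\cdots+c_0\ne 0$. The ensuing constraint $N(b-a)P(1)\in\mathbb{Z}+O(1)$ for every $N$ is incompatible with $0<b-a<1$.

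The main obstacle lies in making Step~3 rigorous. The BRS bound is a statement about the \emph{indicator} $\mathbf{1}_{[a,b)}$, whereas the recurrence of Step 2 is a \emph{linear} identity among the fractional parts themselves; the two ingredients cannot be combined by a naive summation. Bridging this gap likely requires either replacing $\mathbf{1}_{[a,b)}$ by a smoothed test function (a sawtooth or a finite Fourier truncation), to which the recurrence identity can be applied after summation—with the BRS hypothesis providing uniform-in-$N$ control over the error—or a combinatorial stratification of the orbit by the finitely many integer values taken by $\sum_i c_i\{M_{n+i}\alpha\}\in[-\|c\|_1,\|c\|_1]\cap\mathbb{Z}$, isolating one stratum on which the BRS count becomes linear-in-$N$ inconsistent.
\end{refproof}
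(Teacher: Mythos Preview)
Your proposal has two genuine gaps, and the paper's route is entirely different.

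\textbf{Step 1 is not justified.} From $|\{\beta^n\alpha\}-\{M_n\alpha\}|\le C'\lambda^n$ you cannot conclude that the indicators $\mathbf{1}_{[a,b)}$ disagree only finitely often. The parenthetical ``the wrap-around at the boundary has total measure summable in $n$'' is a measure-theoretic statement (the union of the $\lambda^n$-neighbourhoods of $\{a,b\}$ has finite measure), not a pointwise one about a fixed orbit. Nothing in uniform distribution prevents $\{\beta^n\alpha\}$ from visiting shrinking neighbourhoods of $a$ or $b$ infinitely often; in fact the paper's own argument exhibits arbitrarily long runs of $\{\beta^n\alpha\}$ clustering near $0$ and $1$, so for boundary points $a,b$ close to $0$ or $1$ your claim is visibly false. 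What you would need for the BRS property to transfer is that the \emph{running} difference $\sum_{n\le N}\bigl(\mathbf{1}_{[a,b)}(\{\beta^n\alpha\})-\mathbf{1}_{[a,b)}(\{M_n\alpha\})\bigr)$ stays bounded, and summability of $\lambda^n$ does not give that.

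\textbf{Step 3 is, by your own admission, only a heuristic.} The recurrence $\sum_i c_i\{M_{n+i}\alpha\}\in\Z$ is a linear identity in the fractional parts, while the BRS hypothesis controls sums of the indicator $\mathbf{1}_{[a,b)}$; there is no direct way to turn $d{+}1$ shifted BRS counts into the relation $P(1)\,N(b-a)\in\Z+O(1)$. Neither of your suggested fixes (smoothing, or stratifying by the integer value of $\sum_i c_i\{M_{n+i}\alpha\}$) is carried out, and it is not clear either can be made to work: smoothing destroys the exact BRS bound, and within a fixed stratum you have no a priori control on the counting function.

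\textbf{The paper's argument is completely different and elementary.} It never passes to the integer sequence $(M_n)$ or uses the recurrence. Instead it works directly with the $\beta$-expansion $\alpha=\sum_i\alpha_i\beta^{-i}$. Normality guarantees arbitrarily long blocks of zeros among the digits $\alpha_i$; combined with the Pisot estimate $\sum_{n\ge m}\|\beta^n\|<\varepsilon$, this forces $\|\beta^n\alpha\|<\min(\|a\|,\|b\|)$ for arbitrarily long runs of consecutive $n$. Hence every interval $[a,b)$ with $0<a<b<1$ is a \emph{strongly non-bounded remainder set} (arbitrarily many consecutive terms miss it), which immediately rules it out as a BRS. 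A short separate pigeonhole argument handles the intervals $[0,a)$ and $[b,1)$. This approach is constructive and yields the stronger S-NBRS conclusion, whereas your strategy---even if completed---would only give non-BRS.
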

As will be seen from the proof of this theorem in Section \ref{sec:proofs_1}, for these sequences all intervals $[a,b) \subset \ui$ with $a \neq 0$ and $b \neq 1$ (if $\beta$ is an integer this restriction can be omitted) even are non-bounded remainder sets in a very strong sense.
Namely, all of these intervals are \textit{strongly non-bounded remainder sets}:
\begin{definition}
	Let $\xn$ be a sequence in $\ui$. An interval $\ab \subset \ui$ is said to be a \textit{strongly non-bounded remainder set (S-NBRS)} if for all $K \in \N$ there are $K$ successive elements of $\xn$ which all are contained in $\ab$ or which all are not contained in $\ab$.
\end{definition}
Of course the concepts of BRS and S-NBRS can be defined quite analogously in higher dimensions.
It is clear that an S-NBRS is not a BRS, Lemma \ref{lma:consecutive_elements} relates the two notions. Earlier we have asked for an elementary proof of the result of Liardet (Theorem \ref{thm:Liardet}), that certain polynomial sequences do not have (non-trivial) BRS. A first idea to approach such a proof would be to try to proceed like in the proof of Theorem \ref{thm:brs_normal_alpha}, i.e., to try to show, that for such sequences any interval is an S-NBRS, like in the case of the sequence $(\{\beta^n\alpha\})\lc$. In Theorem \ref{thm:sn-brs_polynomial} we show in a very general setting that such an approach has to fail.
\begin{thm}
	\label{thm:sn-brs_polynomial}
	Let $\bm{p}(n) = (p^{(1)}(n), p^{(2)}(n), \dots, p^{(s)}(n))$, where all $p^{(i)}(n)$, $i \leq s$ are real polynomials with the property, that for each point $\bm{h} \in \Z^s, \bm{h} \neq 0$, the polynomial $\langle \bm{h}, \bm{p}(n) \rangle$ has at least one non-constant term with irrational coefficient. Then the sequence $(\{\bm{p}(n)\})\lc$ does not have any s-dimensional S-NBRS.
\end{thm}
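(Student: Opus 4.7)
My plan is a proof by contradiction. Assume that $B=\prod_{i=1}^{s}[a_i,b_i)$ is a non-trivial S-NBRS, so in particular $0<\mu(B)<1$. The strategy is to view the $K$-term window $\bm{p}(n+1),\dots,\bm{p}(n+K)$ modulo $1$ as a polynomial sequence in $j=0,\dots,K-1$, to establish Weyl-type equidistribution for this windowed sequence with a rate that is uniform in the shift $n$, and then to apply the Erdős--Turán--Koksma inequality to bound the window discrepancy $D_K$ uniformly in $n$ by something strictly smaller than $\min(\mu(B),1-\mu(B))>0$. This forces every window of length $K$ to intersect both $B$ and $B^c$ once $K$ is large, contradicting the S-NBRS assumption.

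The algebraic heart of the argument is the following observation. Writing $\bm{p}(n)=\sum_{k}\bm{c}_k n^k$ and $(n+j)^k=\sum_m\binom{k}{m}n^{k-m}j^m$, for any $\bm{h}\in\Z^s\setminus\{0\}$ the coefficient of $j^m$ in $\langle\bm{h},\bm{p}(n+j)\rangle$, regarded as a polynomial in $j$, equals $\sum_{k\geq m}\langle\bm{h},\bm{c}_k\rangle\binom{k}{m}n^{k-m}$. Let $m^*=m^*(\bm{h})\geq 1$ be the largest $m$ for which $\langle\bm{h},\bm{c}_m\rangle$ is irrational; it exists by the hypothesis of the theorem applied to $\bm{h}$. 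Since $\langle\bm{h},\bm{c}_k\rangle\in\mathbb{Q}$ for every $k>m^*$, the coefficient of $j^{m^*}$ equals $\langle\bm{h},\bm{c}_{m^*}\rangle$ plus a rational number that depends on $n$; modulo $\mathbb{Q}$ this coefficient is the intrinsic irrational $\langle\bm{h},\bm{c}_{m^*}\rangle$, independent of $n$. Weyl's inequality applied to $\sum_{j=0}^{K-1}e\bigl(\langle\bm{h},\bm{p}(n+j)\rangle\bigr)$ thus gives a bound of order $o(K)$ depending only on the fixed Diophantine data of $\langle\bm{h},\bm{c}_{m^*}\rangle$, hence uniform in $n$. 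The Erdős--Turán--Koksma inequality, with a suitable cutoff $H=H(K)\to\infty$, promotes this to $D_K\bigl(\bm{p}(n+\cdot)\bigr)\to 0$ uniformly in $n$. A window entirely in $B$ forces $D_K\geq 1-\mu(B)$ while one entirely in $B^c$ forces $D_K\geq\mu(B)$, so taking $K$ large enough rules out both patterns for every $n$.

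The principal obstacle is precisely the uniformity in $n$: a pointwise application of Weyl's theorem to each shifted polynomial yields $D_K(\bm{p}(n+\cdot))\to 0$ for each fixed $n$, but this is not enough because the S-NBRS definition permits the starting index to move with $K$. The identification of the coefficient of $j^{m^*}$ as a fixed irrational plus an $n$-dependent rational is exactly what is needed, since Weyl's inequality is insensitive to rational (in particular integer) shifts in the polynomial's coefficients. A secondary technicality is the correct choice of cutoff $H=H(K)$ in Erdős--Turán--Koksma, which is routine once the individual exponential sums are controlled uniformly in $n$.
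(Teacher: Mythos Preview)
Your strategy is sound and genuinely different from the paper's. The paper never touches Weyl sums or Erd\H{o}s--Tur\'an--Koksma; instead it proves, by an elementary grid-and-pigeonhole argument together with an induction on the degree (Lemmas~\ref{thm:different_alpha_sequence} and~\ref{lma:combination}), that every box of positive measure is visited within a uniformly bounded window, first for the ``monomial'' system $(\{n^{p'}\alpha_{q'}\})$ with $\mathbb Q$-independent $\alpha_{q'}$, and then reduces the general polynomial $\bm p$ to that case by splitting the coefficients into rationals, a $\mathbb Q$-basis of irrationals, and their $\mathbb Q$-combinations, clearing denominators by passing to the subsequence $n\equiv 0\pmod D$. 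Your route amounts to showing directly that $(\{\bm p(n)\})$ is \emph{well}-distributed (window discrepancy $\to 0$ uniformly in the shift), which is shorter and more conceptual once the analytic tools are in hand; the paper's argument is more self-contained and constructive.

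One step in your outline needs repair. The claim that ``Weyl's inequality is insensitive to rational (in particular integer) shifts in the polynomial's coefficients'' is not correct as written: Weyl's bound depends on the Diophantine type of the \emph{leading} coefficient, and when $m^*<d$ the leading coefficient of $j\mapsto\langle\bm h,\bm p(n+j)\rangle$ is the rational $\langle\bm h,\bm c_d\rangle$, so the inequality says nothing useful. What actually rescues the argument is that the coefficients of $j^m$ for every $m>m^*$ are rationals with a \emph{common denominator $D$ independent of $n$} (they are $\mathbb Z$-linear combinations of the fixed rationals $\langle\bm h,\bm c_k\rangle$, $k>m^*$). Splitting the window into residue classes $j\equiv r\pmod D$ turns those terms into integers, invisible to $e(\cdot)$, and in each class you are left with a polynomial in $j'$ of degree exactly $m^*$ whose leading coefficient is $\langle\bm h,\bm c_{m^*}\rangle D^{m^*}\pmod 1$, a fixed irrational independent of $n$ and $r$. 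Now the standard Weyl inequality (which is uniform in the lower-order coefficients) gives the $o(K)$ bound uniformly in $n$, and your Erd\H{o}s--Tur\'an--Koksma step goes through. Equivalently, you can bypass this computation by noting $\sum_{j=0}^{K-1}e(\langle\bm h,\bm p(n+j)\rangle)=\sum_{m=n}^{n+K-1}e(P(m))$ for the fixed scalar polynomial $P=\langle\bm h,\bm p\rangle$ and invoking the well-distribution of $(\{P(m)\})$ for polynomials with an irrational non-constant coefficient; the proof of that fact is precisely the $D$-splitting just described.
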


Let us now consider BRS for sequences of the form $(\{a_n\alpha\})\lc$ where $\an$ is a given strictly increasing sequence of integers, and where $\alpha$ again is an irrational number. Distribution properties of such sequences from different points of view (uniform distribution, discrepancy, law of iterated logarithm, Poissonian pair correlation) were studied in a multitude of papers (see for example for a survey \cite{Drmota:Sequences_Discrepancies_Applications}, or for more recent references \cite{aistleitner_larcher_2016} or \cite{Aistleitner2017}).

We already know: if $a_n = n$, then we have a dense collection of BRS, if $a_n = f(n)$ with $f(x) \in \Z[x]$ of degree $d \geq 2$, or if $a_n = \beta^n$ for some Pisot number $\beta$, then we do not have any BRS. 
In a first impression one could think that the existence of BRS for sequences $(\{a_n\alpha\})\lc$ relies on a "not too strong" growth rate of the sequence $\an$. For example one could have the idea, that sequences $(\{a_n\alpha\})\lc$ with $\an$ lacunary never can have BRS. This indeed, of course, is wrong. A very simple counterexample is the sequence $(\{a_n\alpha\})\lc$ with $a_n = n + q_n$ with $q_n$ a best approximation denominator of $\alpha$. Since $\|q_n\|$, i.e. the distance of $q_n$ to the nearest integer, always is very small, the sequence $(\{a_n\alpha\})\lc$ behaves quite similar to the sequence $\na$ and from this observation we can deduce that $(\{a_n\alpha\})\lc$ has a dense collection of bounded remainder sets. However, in general, $(\{a_n\alpha\})\lc$ and $\na$ do not have the same collection of BRS.
\begin{thm}
	\label{thm:lacunary_brs}
	\begin{enumerate}
		\item Let $\alpha$ be irrational and let $q_n$ be the denominator of the $n$-th convergent from the continued fraction expansion of $\alpha$. Then every interval of the form $[\{k_1\alpha\},\{k_2\alpha\})$ with $k_1,k_2 \in \N$ and $\{k_1\alpha\} < \{k_2\alpha\}$ is a BRS for the uniformly distributed sequence $(\{(q_n+n)\alpha\})\lc$. 
		\item In general, the sequence $(\{(q_n+n)\alpha\})\lc$ does not have the same collection of BRS as the sequence $\na$.
	\end{enumerate}
\end{thm}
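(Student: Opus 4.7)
The strategy for Part 1 exploits the near-integer approximation $\|q_n\alpha\|<1/q_{n+1}$. Setting $\varepsilon_n:=q_n\alpha-\mathrm{round}(q_n\alpha)$, so $|\varepsilon_n|<1/q_{n+1}$, we have $\{(q_n+n)\alpha\}=\{n\alpha+\varepsilon_n\}$, a tiny perturbation of $(\{n\alpha\})_{n\geq 1}$. For $I=[\{k_1\alpha\},\{k_2\alpha\})$, the indicators $\1_{\{(q_n+n)\alpha\}\in I}$ and $\1_{\{n\alpha\}\in I}$ coincide unless $\{n\alpha\}$ lies within $|\varepsilon_n|$ of an endpoint $\{k_i\alpha\}$ or, by wrap-around, of $0$ or $1$. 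At $\{k_i\alpha\}$ this reads $\|(n-k_i)\alpha\|<1/q_{n+1}$; setting $m=n-k_i$ and invoking the best-approximation bound $\|m\alpha\|\geq\|q_{J(m)}\alpha\|>1/(2q_{J(m)+1})$, where $J(m):=\max\{j:q_j\leq m\}$, the inequality forces $q_{n+1}<2q_{J(m)+1}$. Since $q_j$ grows at least Fibonacci-fast while $J(m)$ is only logarithmic in $m$, this is impossible for all sufficiently large $n$; the wrap-around at $0$ is the $k_i=0$ special case. Thus the number of $n\leq N$ on which the two indicators differ is bounded by some constant $C=C(\alpha,k_1,k_2)$.

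Since $\{k_2\alpha\}-\{k_1\alpha\}$ equals either $\{(k_2-k_1)\alpha\}$ or $1-\{(k_1-k_2)\alpha\}$, Kesten's Theorem~\ref{thm:Kesten} yields $\#\{n\leq N:\{n\alpha\}\in I\}=N(\{k_2\alpha\}-\{k_1\alpha\})+O(1)$, and the preceding bound transfers this estimate to $(\{(q_n+n)\alpha\})_{n\geq 1}$, establishing the BRS claim. The uniform distribution of the perturbed sequence is a direct application of Weyl's criterion: $\sum_{n\leq N}e^{2\pi ih(q_n+n)\alpha}=\sum_{n\leq N}e^{2\pi ih\varepsilon_n}e^{2\pi ihn\alpha}$, and $e^{2\pi ih\varepsilon_n}\to 1$ shows the right-hand side differs from the ordinary Weyl sum for $(\{n\alpha\})_{n\geq 1}$ by $o(N)$.

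For Part 2 the plan is to exhibit, for a suitable irrational $\alpha$, an interval of length $\{\alpha\}$ --- hence a BRS of $(\{n\alpha\})_{n\geq 1}$ by Kesten --- that is not a BRS of $(\{(q_n+n)\alpha\})_{n\geq 1}$. One constructs $x\in(0,1-\alpha)$ with $x\notin\{\{k\alpha\}:k\in\Z\}$ as a Cauchy limit $x=\lim_{k\to\infty}\{n_k\alpha\}$, choosing the indices $n_k$ inductively so that (i) all $n_k$ have fixed parity, ensuring that the $\varepsilon_{n_k}$ share a common sign; (ii) all $\{n_k\alpha\}$ lie on the same side of $x$; and (iii) $1/q_{n_k+2}\leq|\{n_k\alpha\}-x|<1/q_{n_k+1}$. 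Condition (iii) ensures that the perturbation $\varepsilon_{n_k}$ does carry $\{n_k\alpha\}$ across the endpoint $x$, while at the companion index $n_k+1$ one has $|\{(n_k+1)\alpha\}-(x+\{\alpha\})|=|\{n_k\alpha\}-x|\geq 1/q_{n_k+2}>|\varepsilon_{n_k+1}|$, so no compensating crossing occurs at the right endpoint. Consequently the signed error $E_N:=\#\{n\leq N:\{(q_n+n)\alpha\}\in I\}-\#\{n\leq N:\{n\alpha\}\in I\}$ picks up one contribution of fixed sign from each $n_k\leq N$, so $|E_N|\to\infty$ and $I=[x,x+\{\alpha\})$ fails to be a BRS for $(\{(q_n+n)\alpha\})_{n\geq 1}$. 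The main obstacle is carrying out this inductive construction of $\{n_k\}$ simultaneously respecting (i)--(iii) and verifying that no other indices $n$ produce spurious cancelling crossings --- the latter reduces again to the best-approximation analysis.
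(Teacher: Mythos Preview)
For Part~1 your approach is essentially the paper's: compare $(\{(q_n+n)\alpha\})$ with $(\{n\alpha\})$ and show the two indicator functions for $I$ agree for all but finitely many $n$. The paper's execution is cleaner, though. It invokes the best-approximation property in its most direct form: for $n\geq 4$ one has $q_n>n$, hence $\|q_n\alpha\|<\min_{1\leq l\leq n}\|l\alpha\|\leq\|(n-k_i)\alpha\|$ whenever $n>\max(k_1,k_2)$. Thus for $n>\max(k_1,k_2,3)$ the perturbation is strictly smaller than the circle-distance from $\{n\alpha\}$ to either endpoint, and no crossing ever occurs past that point. Your detour through $\|m\alpha\|>1/(2q_{J(m)+1})$ and Fibonacci growth reaches the same conclusion with more moving parts; also, the separate ``wrap-around at $0$ or $1$'' case is unnecessary, since on $\R/\Z$ the only boundary points of $I$ are $\{k_1\alpha\}$ and $\{k_2\alpha\}$.

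For Part~2 the strategy matches the paper's, but there is a genuine gap in your formulation. Condition~(iii), $|\{n_k\alpha\}-x|<1/q_{n_k+1}$, does \emph{not} ensure that $\varepsilon_{n_k}$ carries $\{n_k\alpha\}$ across $x$: since $|\varepsilon_{n_k}|=\|q_{n_k}\alpha\|<1/q_{n_k+1}$ as well, both quantities lie below the same threshold and nothing forces $|\varepsilon_{n_k}|>|\{n_k\alpha\}-x|$. The paper sidesteps this by taking the endpoint to be $a=\lim_i\{(q_{n_i}+n_i)\alpha\}$, the limit of the \emph{perturbed} points (with $n_{i+1}=n_i+q_{n_i+2}$ and the concrete choice $\alpha=[0;1,1,\bar 2]$); by construction $a$ lies strictly between $\{n_i\alpha\}$ and $\{(q_{n_i}+n_i)\alpha\}$ for every $i$, so the crossing is automatic. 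Furthermore, your ``companion index $n_k+1$'' observation, while correct, rules out a crossing at $x+\{\alpha\}$ only for that single index; the paper carries out a full case analysis showing that \emph{no} $n>n_1$ produces a crossing in the wrong direction at either endpoint, exploiting the nested structure of the $n_i$ and the fact that $\{n_{i+1}\alpha\}$ is the nearest neighbour of $\{n_i\alpha\}$ among the first $n_{i+1}$ points of $(\{n\alpha\})$. That verification is the substantive content of Part~2 and cannot be replaced by a one-line appeal to best approximation.
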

Of course BRS for a sequence $(\{a_n\alpha\})\lc$ for an (ultimately) increasing $\an$ only can exist if $a_n$ grows at least linearly. This is a simple fact but for the sake of completeness we formulate it as a theorem:
\begin{thm}
	\label{thm:slow_growth_rate}
	Let $\an$ be an increasing sequence of integers and let $\ana$ be uniformly distributed. If  
	\begin{equation*}
	\lim_{n\rightarrow \infty}\frac{a_n}{n} = 0,
	\end{equation*}
	then every interval $\ab \subset \ui$ of positive measure is an S-NBRS.
\end{thm}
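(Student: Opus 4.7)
My plan rests on a single elementary observation about sublinear integer sequences: an increasing integer sequence with $a_n = o(n)$ must be constant on stretches of arbitrary length. Once this is available, the S-NBRS property for every interval follows almost immediately, since a stretch of equal $a_n$'s produces a stretch of equal $\{a_n\alpha\}$'s, and such a stretch trivially lies either all in $[a,b)$ or all in its complement.

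The first step is to establish the following pigeonhole-type claim: for every $K \in \N$ there exist infinitely many $n$ with $a_n = a_{n+1} = \dots = a_{n+K}$. I would argue by contradiction. If this failed for some $K$, then for all $n \geq n_0$ we would have $a_{n+K} > a_n$, and hence, since the $a_i$ are integers, $a_{n+K} \geq a_n + 1$. Iterating along the arithmetic progression $n_0, n_0+K, n_0+2K, \dots$ yields $a_{n_0+mK} \geq a_{n_0} + m$, so $\liminf_{n\to\infty} a_n/n \geq 1/K > 0$, contradicting the hypothesis $a_n/n \to 0$.

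The second step is then an immediate application. Given any interval $[a,b)\subset\ui$ of positive measure and any $K \in \N$, choose $n$ as above. Then
\[
x_n = x_{n+1} = \dots = x_{n+K} = \{a_n\alpha\},
\]
so these $K+1$ consecutive values of the sequence are either all contained in $[a,b)$ or all contained in its complement, which is precisely the S-NBRS condition.

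I do not expect any serious obstacle. The uniform distribution assumption plays no active role in the argument; it only serves to exclude the degenerate case where $(a_n)$ becomes eventually constant and the statement would hold trivially. The only genuine care is in the pigeonhole step, where one must extract the correct quantitative growth bound from the failure of long constant stretches; the rest is essentially mechanical.
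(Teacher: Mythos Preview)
Your proof is correct and follows essentially the same approach as the paper: both observe that sublinear growth of an increasing integer sequence forces arbitrarily long constant stretches, from which the S-NBRS property is immediate. The paper simply asserts this pigeonhole claim without justification, whereas you supply the clean contradiction argument, so your write-up is in fact more complete.
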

For every other growth rate, however, it is possible to achieve even a dense collection of BRS.
\begin{thm}
	\label{thm:arbitrary_growth_rate_Brs}
	Let $\varphi(n)$ be any strictly increasing sequence with $\varphi(n) \geq 2n$ for all $n \in \N$. Then for all irrational $\alpha$ with bounded continued fraction coefficients there is a constant $L$ and a strictly increasing sequence of integers $\an$, with $\varphi(n) \leq a_n \leq L\varphi(n)$ for all $n \geq 1$ such that $\ana$ is uniformly distributed and such that $\ana$ has a dense collection of BRS.
\end{thm}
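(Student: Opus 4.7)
The plan is to realise $a_n$ as a small Diophantine perturbation of $n$. Let $K$ be an upper bound for the partial quotients of $\alpha$, so the convergent denominators $(q_k)_{k \geq 0}$ of $\alpha$ satisfy $q_{k+1} \leq (K+1)q_k$, and set $L := K+2$. For each $n$, let $k(n)$ denote the least index with $q_{k(n)} \geq \varphi(n) - n$, and define $a_n := n + q_{k(n)}$. Since $\varphi(n) \geq 2n$ gives $\varphi(n) - n \geq \varphi(n)/2$, and $q_{k(n)} < (K+1)(\varphi(n) - n)$ by minimality, one checks $a_n \in [\varphi(n), L\varphi(n)]$. Strict monotonicity follows because $\varphi(n)-n$ is non-decreasing, hence so is $k(n)$, which yields $a_{n+1} \geq n+1+q_{k(n)} > a_n$.

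Writing $q_{k(n)}\alpha = p_{k(n)} + \delta_n$ with $|\delta_n| \leq 1/q_{k(n)+1} \leq 2/\varphi(n) \to 0$, the sequence $(\{a_n\alpha\})$ differs from $\na$ by the vanishing perturbation $\delta_n$, so uniform distribution is inherited via a standard approximation argument.

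For the dense collection of BRS, following the pattern of Theorem~\ref{thm:lacunary_brs} I would show that every interval $I = [\{k_1\alpha\}, \{k_2\alpha\})$ with $\{k_1\alpha\} < \{k_2\alpha\}$ is a BRS for $(\{a_n\alpha\})$; these intervals are dense in the space of subintervals of $[0,1)$. Since $|I|$ is of the form $\{j\alpha\}$, Kesten's Theorem~\ref{thm:Kesten} yields
\begin{equation*}
\#\{n \leq N : \{n\alpha\} \in I\} = N|I| + O(1),
\end{equation*}
and the remaining task is to bound the perturbation error $\mathrm{Err}(N) := \#\{n \leq N : \{a_n\alpha\} \in I\} - \#\{n \leq N : \{n\alpha\} \in I\}$ uniformly in $N$. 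A boundary-crossing analysis gives $|\mathrm{Err}(N)| \leq \sum_{j \in \{1,2\}} \#\{n \leq N : \|(n-k_j)\alpha\| \leq |\delta_n|\}$. Using $\|m\alpha\| \geq c/|m|$ (which holds because $\alpha$ has bounded partial quotients) together with $|\delta_n| \leq 1/q_{k(n)+1}$, the condition $\|(n-k_j)\alpha\| \leq |\delta_n|$ forces $|n-k_j| \geq c \cdot q_{k(n)+1} \geq c(\varphi(n)-n)$. For $\varphi$ with $\varphi(n)/n \to \infty$ this lower bound eventually exceeds $n+k_j \geq |n-k_j|$, so only finitely many $n$ contribute and $\mathrm{Err}(N) = O(1)$.

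The principal obstacle is the borderline regime $\varphi(n) = \Theta(n)$, where $|\delta_n| \asymp 1/n$ and the estimate above yields only $\mathrm{Err}(N) = O(\log N)$, insufficient for BRS. I would handle this by switching the anchoring sequence to $(\{2n\alpha\})$: redefine $a_n := 2n + q_{k(n)}$ with $q_{k(n)}$ chosen in $[\varphi(n)-2n, L\varphi(n)-2n]$ (permitting $q_{k(n)} = 0$ precisely when $\varphi(n) = 2n$). Then $(\{a_n\alpha\})$ perturbs $(\{2n\alpha\})$, whose BRS by Kesten (applied to $2\alpha$) are still a dense collection, and the analogous perturbation analysis now succeeds because the shift $2n$ sidesteps the problematic near-linear regime. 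Unifying both regimes into a single monotone sequence valid for all $\varphi(n) \geq 2n$ is the technical crux of the argument.
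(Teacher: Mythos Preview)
Your construction $a_n = n + q_{k(n)}$ with $q_{k(n)}$ the first convergent denominator $\geq \varphi(n)-n$ is exactly the paper's choice, and your verification of the growth bound and of strict monotonicity is fine (the paper in fact omits the monotonicity check).

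Where you diverge is in the BRS argument, and there you manufacture a difficulty that is not present. You bound the boundary-crossing count via the Liouville-type inequality $\|m\alpha\|\geq c/|m|$, obtain only $O(\log N)$ in the linear regime $\varphi(n)=\Theta(n)$, and then propose re-anchoring at $(\{2n\alpha\})$. The paper avoids all of this by using the \emph{best approximation} property of convergent denominators rather than the cruder badly-approximable bound: for every $1\le m<q_{k(n)}$ one has $\|q_{k(n)}\alpha\|<\|m\alpha\|$. Since $\varphi(n)\ge 2n$ forces $q_{k(n)}\ge \varphi(n)-n\ge n$, for $n>\max(k_1,k_2)$ the integer $m=n-k_j$ satisfies $1\le m<q_{k(n)}$, hence
\[
\|a_n\alpha-n\alpha\|=\|q_{k(n)}\alpha\|<\|(n-k_j)\alpha\|,
\]
which is precisely the distance from $\{n\alpha\}$ to the boundary point $\{k_j\alpha\}$. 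Thus $\{a_n\alpha\}\in J\iff\{n\alpha\}\in J$ for all $n>\max(k_1,k_2)$, so $\mathrm{Err}(N)$ is eventually constant---in every growth regime, including $\varphi(n)=2n$. No case distinction and no $2n$-trick are needed; the bounded-partial-quotients hypothesis is used only to control $a_n\le L\varphi(n)$, not for the BRS part.
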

In Theorem \ref{thm:lacunary_brs} and in Theorem \ref{thm:arbitrary_growth_rate_Brs} we have given examples of sequences $\ana$ with dense collections of BRS for always one choice of $\alpha$. But it is also possible for any countable set of irrationals $\alpha$ to give an integer sequence $\an$ such that $\ana$ has a dense collection of BRS for all of these $\alpha$.
\begin{thm}
	\label{thm:countably_many_alpha}
	Let $\mathcal{A}$ be a countable set of irrational numbers. Then there is a lacunary sequence $\an$ of integers such that $\ana$ is uniformly distributed and such that $\ana$ has a dense collection of BRS for all $\alpha \in \mathcal{A}$.
\end{thm}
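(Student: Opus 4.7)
I shall generalise the construction of Theorem~\ref{thm:lacunary_brs}(1) by setting $a_n = n + b_n$, where each $b_n$ is an integer that is simultaneously very close modulo~$1$ to a multiple of each $\alpha$ from a growing finite subset of $\mathcal{A}$. Write $\mathcal{A} = \{\alpha_1,\alpha_2,\ldots\}$ and enumerate $\N \times \Z$ as $(k_l,j_l)_{l\geq 1}$. Construct $b_n$ inductively: put $K(n):=\max_{l\leq n}k_l$ and
\[
\delta_n := \min\!\Bigl(\tfrac{1}{n},\ \tfrac{1}{2}\min\bigl\{\|(n-j_l)\alpha_{k_l}\| : l\leq n,\ j_l\neq n\bigr\}\Bigr) > 0,
\]
and choose $b_n \in \N$ with $b_n \geq 2(b_{n-1}+n)$ and $\max_{k \leq K(n)}\|b_n\alpha_k\| < \delta_n$. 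Such a $b_n$ exists because for any finite collection of irrationals and any $\delta > 0$ the set of common integer $\delta$-approximations is unbounded in $\N$ (Dirichlet's simultaneous approximation theorem, together with the closure of $\delta$-approximations under sums and differences up to factor two). Then $a_n := n + b_n$ satisfies $a_n \geq 2a_{n-1}$, so $(a_n)$ is strictly increasing and Hadamard lacunary.

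\emph{Uniform distribution.} For each fixed $k$ and $n$ large enough that $K(n) \geq k$, the torus distance $|\{a_n\alpha_k\} - \{n\alpha_k\}| \leq \|b_n\alpha_k\| < 1/n$, so $\|b_n\alpha_k\| \to 0$. For every continuous $f \colon \R/\Z \to \R$, the Cesàro averages of $f(\{a_n\alpha_k\})$ and $f(\{n\alpha_k\})$ therefore agree in the limit, and uniform distribution of $(\{a_n\alpha_k\})\lc$ follows from that of the Kronecker sequence $(\{n\alpha_k\})\lc$ via Weyl's criterion.

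\emph{Dense collection of BRS.} Fix $\alpha_k \in \mathcal{A}$ and an interval $I = [\{j_1\alpha_k\}, \{j_2\alpha_k\})$ with $\{j_1\alpha_k\} < \{j_2\alpha_k\}$; let $n_0$ be the maximum of the enumeration indices of the pairs $(k,j_1)$ and $(k,j_2)$. For all $n \geq n_0$ with $n \notin \{j_1,j_2\}$ the definition of $\delta_n$ yields $\|b_n\alpha_k\| < \tfrac12\|(n-j_r)\alpha_k\|$ for $r = 1, 2$, so the shift $b_n\alpha_k$ cannot push $\{n\alpha_k\}$ across either endpoint of $I$ and $\mathbf{1}_I(\{a_n\alpha_k\}) = \mathbf{1}_I(\{n\alpha_k\})$. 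Thus the counting functions of $(\{a_n\alpha_k\})\lc$ and of $(\{n\alpha_k\})\lc$ on $I$ differ by a bounded quantity, and Kesten's theorem (Theorem~\ref{thm:Kesten}) makes $I$ a BRS for $(\{a_n\alpha_k\})\lc$. Since the set $\{\{j\alpha_k\} : j \in \Z\}$ is dense in $[0,1)$, these intervals form a dense collection of BRS for every $\alpha_k \in \mathcal{A}$.

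The main obstacle I foresee is verifying the feasibility of the inductive choice of $b_n$: the lacunarity lower bound $b_n \geq 2(b_{n-1}+n)$ has to be compatible with the finitely many Diophantine constraints $\max_{k \leq K(n)}\|b_n\alpha_k\| < \delta_n$. This is exactly the content of the observation recalled above and requires only that each $\alpha_k$ be irrational; no $\Q$-linear independence of the $\alpha_k$ is used, so the construction applies to an arbitrary countable $\mathcal{A}$.
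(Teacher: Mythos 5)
Your proof is correct and follows essentially the same route as the paper: perturb the Kronecker sequence via $a_n = n + b_n$ with $b_n$ a simultaneous Dirichlet approximation denominator for the first several elements of $\mathcal{A}$, so that membership of $\{a_n\alpha_k\}$ in intervals with endpoints of the form $\{j\alpha_k\}$ eventually coincides with that of $\{n\alpha_k\}$, and conclude via Kesten's theorem. If anything, you are more explicit than the paper about enforcing lacunarity and about why arbitrarily large admissible $b_n$ exist.
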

In Section \ref{sec:proofs_1} we will give the proofs of the Theorems \ref{thm:brs_normal_alpha}, \ref{thm:lacunary_brs}, \ref{thm:slow_growth_rate}, \ref{thm:arbitrary_growth_rate_Brs}, and \ref{thm:countably_many_alpha} and Section \ref{sec:proof_2} is dedicated to the proof of Theorem \ref{thm:sn-brs_polynomial}.

%
%
%
%
%
%

\section{Proofs of Theorems \ref{thm:brs_normal_alpha}, \ref{thm:lacunary_brs}, \ref{thm:slow_growth_rate}, \ref{thm:arbitrary_growth_rate_Brs}, and \ref{thm:countably_many_alpha}}
\label{sec:proofs_1}

The first lemma characterizes the relation between bounded remainder sets and \linebreak
S-NBRS and will be used to prove Theorem \ref{thm:brs_normal_alpha}.

\begin{lemma}
	\label{lma:consecutive_elements}
	Let $(x_n)\lc$ be a sequence in $[0,1)$. 
	\begin{enumerate}
		\item If the interval $[a,b) \subset [0,1)$ is an S-NBRS, then $[a,b)$ is not a BRS.
		\item If $(x_n)\lc$ has a dense collection of BRS, then there do not exist any S-NBRS.
	\end{enumerate}
\end{lemma}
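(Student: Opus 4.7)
The plan is to work directly with the (unnormalized) counting discrepancy $\Delta_N := A_N - N(b-a) = N\cdot D_N(\ab)$, where $A_N = \#\{1 \leq n \leq N : x_n \in \ab\}$; the BRS property is exactly $\sup_N |\Delta_N| < \infty$. The core observation for both parts is the following block identity: if $x_{m+1},\ldots,x_{m+K}$ all lie in $\ab$, then $\Delta_{m+K} - \Delta_m = K\bigl(1-(b-a)\bigr)$, while if they all lie outside $\ab$, then $\Delta_{m+K} - \Delta_m = -K(b-a)$. In either case the triangle inequality yields $\max\bigl(|\Delta_{m+K}|,|\Delta_m|\bigr) \geq \tfrac{K}{2}\min\bigl(b-a,\,1-(b-a)\bigr)$.

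For part (i), the argument is then immediate: if $\ab$ is an S-NBRS with $0 < b - a < 1$, then for every $K \in \N$ the definition supplies a block witnessing one of the two cases above, so $|\Delta_N|$ is unbounded along a suitable subsequence of indices, and $\ab$ fails to be a BRS.

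For part (ii), I would argue by contradiction. Assume $\xn$ has a dense collection of BRS and, for contradiction, fix an S-NBRS $\ab$ with $0 < b - a < 1$. Choose $\varepsilon > 0$ so small that $b - a + 2\varepsilon < 1$, and then, using the density hypothesis, pick two BRS: $[\alpha_1,\beta_1)$ with $\alpha_1 \in (a-\varepsilon,a)$ and $\beta_1 \in (b,b+\varepsilon)$ (so $\ab \subset [\alpha_1,\beta_1)$), together with $[\alpha_2,\beta_2)$ with $\alpha_2 \in (a,a+\varepsilon)$ and $\beta_2 \in (b-\varepsilon,b)$ (so $[\alpha_2,\beta_2) \subset \ab$). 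By the S-NBRS property, for every $K$ at least one of the alternatives ``all in $\ab$'' or ``all not in $\ab$'' occurs; by pigeonhole, one of them occurs for infinitely many $K$. If the ``all in'' alternative recurs, those blocks also sit inside the larger interval $[\alpha_1,\beta_1)$, and the block identity forces the counting discrepancy of $[\alpha_1,\beta_1)$ to be unbounded, contradicting its being a BRS. If instead ``all not in'' recurs, those blocks also miss the smaller interval $[\alpha_2,\beta_2)$, and the same identity contradicts the BRS property of $[\alpha_2,\beta_2)$.

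The only subtle ingredient is the two-sided approximation of $\ab$ by BRS: monotonicity of the count with respect to interval inclusion means that ``all inside $\ab$'' is preserved only when one enlarges $\ab$, whereas ``all outside $\ab$'' is preserved only when one shrinks it. The density of the BRS collection is precisely what allows one to produce approximating BRS of both shapes simultaneously, and I expect this bookkeeping to be the main (though still modest) obstacle; every further estimate reduces to the one-line block identity stated at the outset.
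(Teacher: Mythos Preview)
Your argument for part~(i) is the paper's own computation, merely rearranged: they assume the BRS bound $c$ and derive a contradiction once $k>2c/(1-b+a)$ (for the ``all in'' case) or $k>2c/(b-a)$ (for the ``all out'' case), which is exactly your block identity read contrapositively. Part~(ii) likewise follows the paper's inner/outer sandwich of $[a,b)$ between two BRS.

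There is one genuine omission in your part~(ii): the outer BRS $[\alpha_1,\beta_1)$ with $\alpha_1<a$ and $\beta_1>b$ need not exist when $a=0$ or $b=1$, since the density hypothesis only furnishes BRS with endpoints in $[0,1)$. The paper treats this boundary situation separately by passing to the complement: if, say, $a=0$, then ``$K$ consecutive elements inside $[0,b)$'' is the same as ``$K$ consecutive elements outside $[b,1)$,'' and the latter is already controlled by an inner BRS $[\alpha_2,\beta_2)\subset[b,1)$. The fix is one line, but it is needed---particularly since your final paragraph singles out the two-sided approximation as the only subtlety without noticing that it can fail at the boundary. (A smaller point: you should also take $\varepsilon<(b-a)/2$ to guarantee $\alpha_2<\beta_2$.)
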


\begin{proof}
	For the proof of the first part consider intervals $[a,b) \subset [0,1)$ which are bounded remainder sets for an arbitrary sequence $(x_n)\lc$. By definition, there exists a constant $c$ independent of $N$ such that
	\begin{equation*}
	|\#\{1 \leq n \leq N: x_n \in [a,b)\} - (b-a) N| \leq c
	\end{equation*}
	for all $N \in \N$.
	Let $k \in \N, k > \frac{2c}{1-b+a}$ and first suppose that there exists an $N$ such that 
	\begin{equation*}
	\#\{N < n \leq N+k: x_n \in [a,b)\} = k.
	\end{equation*}
	Considering this particular $N$ we have that
	\begin{align*}
	\#\{1 \leq n \leq N+k&: x_n \in [a,b)\} - (b-a)(N+k) \\
	&= \#\{1 \leq n \leq N: x_n \in [a,b)\}\\
	&\quad + \#\{N + 1 \leq n \leq N+k: x_n \in [a,b)\} 
	- (b-a)(N+k) \\
	&\geq -c + k -(b-a)k \\
	&> c,
	\end{align*}
	which is a contradiction.
	On the other hand, if some $k > \frac{2c}{b-a}$ successive elements are not contained in $\ab$ we have
	\begin{align*}
	\#\{1 \leq n \leq N+k&: x_n \in [a,b)\} - (b-a)(N+k) \\
	&= \#\{1 \leq n \leq N: x_n \in [a,b)\}	- (b-a)(N+k) \\
	&\leq c -(b-a)k < -c,
	\end{align*}
	Therefore, an S-NBRS cannot be a BRS.
	
	For the second part of the lemma let $(x_n)\lc$ be a sequence with a dense collection of bounded remainder sets and consider $[a,b) \subset [0,1)$.
	Then there exists an interval $[\underline{a},\underline{b}) \subseteq [a,b)$ which is a BRS for $(x_n)\lc$. Since there exists $K$ such that at most $K$ consecutive elements of $(x_n)\lc$ are not contained in $[\underline{a},\underline{b})$ this also holds for the interval $\ab$.
	Moreover, if $a \neq 0$ and $b \neq 1$ there also exists $[\bar{a},\bar{b}) \supset \ab$, which is a BRS and thus the number of consecutive elements of $\xn$ in $[\bar{a},\bar{b})$ as well as in $\ab$ is bounded.
	In case that $a = 0$ or $b = 1$ we consider the complement of $\ab$. Since the number of successive elements of $\xn$ which are not contained in $\ui \setminus \ab$ is bounded, so is the number of successive elements which are contained in $\ab$.
	Therefore, no interval $[a,b)$ can be an S-NBRS.
\end{proof}

\begin{refproof}[Proof of Theorem \ref{thm:brs_normal_alpha}.]
	We exclude the unit interval $[0,1)$ from our considerations because it trivially fulfills the properties of a bounded remainder set.
	
	Let $\beta > 1$ be a Pisot number and let $\alpha \in \ui$ with expansion 
	\begin{equation*}
	\alpha = \sum_{i = 1}^{\infty} \alpha_i\beta^{-i},
	\end{equation*}
	where $\alpha \in \{0,1, \dots, \lceil\beta\rceil -1\}$ and such that $\sum_{i > j} \alpha_i\beta^{-i} < \beta^{-j}$ for all $j \geq 0$. Moreover, we choose $\alpha$ to be normal to base $\beta$.
	At first, consider intervals of the form $\ab$, where $a \neq 0$ and $b \neq 1$ and define $\varepsilon := \min\left(\frac{\|a\|}{2}, \frac{\|b\|}{2}\right)$. 
	
	Since $\beta$ is a Pisot number, $\beta^n$ is a good approximation of an integer, i.e. $\|\beta^n\|$ converges to zero at an exponential rate. Therefore, $\sum_{n = 0}^{\infty}\|\beta^n\| < c$ for some constant $c$ and by $m$ we denote the first index such that
	\begin{equation*}
	\sum_{n = m}^{\infty} \|\beta^n\| < \frac{1}{\lceil\beta\rceil -1} \varepsilon.
	\end{equation*}
	
	Furthermore, consider the $\beta$-expansion of $\varepsilon$, i.e.
	\begin{equation*}
	\varepsilon = \sum_{i = 1}^{\infty}\varepsilon_i\beta^{-i},
	\end{equation*}
	with coefficients $\varepsilon_i \in \{0,1, \dots,\lceil\beta\rceil -1\}$. Let $j$ denote the index of the first coefficient for which $\varepsilon_j \neq 0$.
	A sufficient condition for  $\sum_{i = n+1}^{\infty}\alpha_i\beta^{n-i} < \varepsilon$ is then given by $\alpha_{n+1} = \dots = \alpha_{n+j} = 0$.
	
	From the fact that $\alpha$ is normal to base $\beta$, i.e., every admissible finite block $d$ of integers $d_i \in \{0,1, \dots, \lceil\beta\rceil -1\}$ occurs with the right frequency in the $\beta$-expansion of $\alpha$, the occurrence of arbitrary long blocks consisting only of zeros in the base $\beta$ expansion of $\alpha$ follows. In other words, for all $k \in \N$ there exists an index $N$ for which it holds that
	
	\begin{equation*}
	\alpha_{N+1} = \alpha_{N+2} = \dots = \alpha_{N + m + k + j} = 0.
	\end{equation*}
	For the elements $\{\beta^n\alpha\}$ with $N + m < n \leq N + m + k$ it holds that $\alpha_{n-i} = 0$ for $i = 0, 1, \dots, m - 1$ which implies that
	\begin{equation*}
	\|\sum_{i = 0}^{n-1}\alpha_{n-i}\beta^{i}\| = \|\sum_{i = m}^{n-1}\alpha_{n-i}\beta^{i}\| 
	\leq \sum_{i = m}^{n-1}\alpha_{n-i}\|\beta^{i}\| < \varepsilon.
	\end{equation*}
	Additionally, we have $\alpha_i = 0$ for $i = n +1, \dots, n + j$ and $N + m < n \leq N + m + k$. Hence, 
	\begin{equation*}
	\sum_{i = n+1}^{\infty}\alpha_i\beta^{n-i} < \varepsilon.
	\end{equation*}
	Since $\varepsilon < 1/4$ it holds that
	\begin{align*}
	\|\beta^n\alpha\| 
	&= \|\sum_{i = 0}^{n-1}\alpha_{n-i}\beta^{i} + \sum_{i = n+1}^{\infty}\alpha_i\beta^{n-i}\| \\
	&\leq \|\sum_{i = 0}^{n-1}\alpha_{n-i}\beta^{i}\| + \sum_{i = n+1}^{\infty}\alpha_i\beta^{n-i} \\
	&< 2\varepsilon = \min(\|a\|,\|b\|)
	\end{align*}
	for $N + m < n \leq N + m + k$.
	Therefore, for any $k \in \N$ there exist $k$ consecutive elements $\{\beta^n\alpha\}$ which are not contained in the interval $\ab$, which therefore is an S-NBRS.
	
	Note that if $\beta$ is an integer it holds that $\{\sum_{i = 0}^{n-1}\alpha_{n-i}\beta^{i}\} = 0$ and
	\begin{equation*}
	\{\beta^n\alpha\} = \sum_{i = n+1}^{\infty}\alpha_i\beta^{n-i}.
	\end{equation*} 
	Thus, the occurrence of arbitrary long blocks of zeros implies the existence of arbitrarily many successive elements which are contained in some interval $[0,a)$. It follows that in this case even intervals of the form $[0,a)$ or $[b,1)$ are S-NBRS.
	
	Nevertheless, for arbitrary base $\beta$ it remains to show that intervals $[0,a)$ or $[b,1)$ cannot be BRS as well. From the previous considerations we know that the number of consecutive elements of $(\{\beta^n\alpha\})\lc$ which are not contained in some interval $\ab$ with $0 < a < b <1$ is unbounded, i.e. for any  $k$ there exists an $N$ such that $\{\beta^n\alpha\} \in [0,a)\cup[b,1)$ for all $N < n \leq N + k$. Now consider a strictly increasing sequence of such integers $(k_i)$ with corresponding indices $N_i$ and let $\varepsilon = \frac{b-a}{3}$.
	
	Then, let
	\begin{equation*}
	A_ik_i = \#\{N_i < n \leq N_i + k_i : \{\beta^n\alpha\} \in [0,a)\}
	\end{equation*}
	and
	\begin{equation*}
	B_ik_i = \#\{N_i < n \leq N_i + k_i : \{\beta^n\alpha\} \in [b,1)\}.
	\end{equation*}
	Since $A_i + B_i = 1$ and $1 - (b-a) + 2\varepsilon < 1$ it either holds that
	\begin{equation*}
	A_i > a + \varepsilon \qquad \text{for infinitely many i},
	\end{equation*}
	or 
	\begin{equation*}
	B_i > 1-b + \varepsilon \qquad \text{for infinitely many i}.
	\end{equation*}
	W.l.o.g. let us assume the first case and suppose that there exists a constant $c$ such that
	\begin{equation*}
	ND_N([0,a)) = |\#\{1 \leq n \leq N: \{\beta^n\alpha\} \in [0,a)\} - N a| \leq c
	\end{equation*}
	for all $N \in \N$. We use this property for $N_i$ and obtain
	\begin{align*}
	(N_i + k_i)D_{N_i + k_i}([0,a)) &= \#\{1 \leq n \leq N_i + k_i : \{\beta^n\alpha\} \in [0,a)\} - (N_i+k_i)a\\
	&\geq A_i k_i - a k_i -c \\
	&> \varepsilon k_i -c.
	\end{align*}
	Since $\varepsilon k_i$ is unbounded, so is $(N_i + k_i)D_{N_i + k_i}([0,a))$, and $[0,a)$ as well as $[a,1)$ therefore cannot be BRS.
\end{refproof}

The next lemma will be of use in the proofs of Theorem \ref{thm:lacunary_brs}, \ref{thm:arbitrary_growth_rate_Brs} and \ref{thm:countably_many_alpha}.
\begin{lemma}
	\label{lma:dcbrs_ud}
	Let $(x_n)_{n \geq 1}$ be a sequence in $[0,1)$ which has a dense collection of bounded remainder sets. Then $(x_n)_{n \geq 1}$ is uniformly distributed modulo 1.  
\end{lemma}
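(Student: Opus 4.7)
The plan is to sandwich the counting function of an arbitrary interval between BRS that approximate it from inside and, through its complement, from outside, using only the density hypothesis together with the fact that every BRS is automatically equidistributed for its own interval. The latter fact is immediate: the defining bound $|D_N([c,d))| \leq c/N$ forces
\begin{equation*}
\frac{1}{N}\#\{n \leq N : x_n \in [c,d)\} \longrightarrow d - c
\end{equation*}
for any BRS $[c,d)$, so density will supply many test intervals on which the limiting frequency is known exactly.

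Let $[a,b) \subseteq [0,1)$ be an arbitrary interval and fix $\varepsilon > 0$. For the lower bound I would invoke the density of BRS to produce a BRS $[a^-, b^-) \subseteq [a,b)$ with $b^- - a^- > b - a - \varepsilon$, which by the preliminary observation yields
\begin{equation*}
\liminf_{N \to \infty} \frac{\#\{n \leq N : x_n \in [a,b)\}}{N} \geq b^- - a^- > b - a - \varepsilon.
\end{equation*}
For the upper bound I would apply density separately inside each piece of the complement $[0,a) \cup [b,1)$, producing two disjoint BRS $J_1 \subseteq [0,a)$ and $J_2 \subseteq [b,1)$ with $|J_1| \geq a - \varepsilon$ and $|J_2| \geq 1 - b - \varepsilon$. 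Since $[a,b)$ is disjoint from $J_1 \cup J_2$, the same observation applied to $J_1$ and $J_2$ gives
\begin{equation*}
\limsup_{N \to \infty} \frac{\#\{n \leq N : x_n \in [a,b)\}}{N} \leq 1 - |J_1| - |J_2| \leq b - a + 2\varepsilon,
\end{equation*}
and sending $\varepsilon \to 0$ delivers $\frac{1}{N}\#\{n \leq N : x_n \in [a,b)\} \to b - a$, which is precisely uniform distribution modulo $1$.

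The one real obstacle is the handling of boundary or very short pieces, namely the situations $a = 0$, $b = 1$, or one of $a$, $1 - b$ smaller than $\varepsilon$; in such cases density need not furnish a substantial inner BRS of the affected complement piece. The remedy is simply to take the corresponding $J_i$ to be the empty set, which is trivially a BRS, and to note that the missing piece has length at most $\varepsilon$, so its absence costs only an extra additive $\varepsilon$ in the final estimate and is absorbed harmlessly into the $\varepsilon$-budget. Beyond this bookkeeping the argument is routine, the entire content being that density plus the equidistribution of each BRS for its own interval squeezes the counting function of every interval onto its Lebesgue length.
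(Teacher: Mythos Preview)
Your proof is correct and takes essentially the same sandwich approach as the paper: both use the density hypothesis together with the fact that a BRS is automatically equidistributed on its own interval to squeeze the counting function of an arbitrary $[a,b)$. The only cosmetic difference is that the paper obtains the upper bound via a single outer BRS $[\underline{a},\bar{b})\supseteq [a,b)$ rather than by placing inner BRS inside the two complement pieces, and sidesteps the boundary cases by taking $a,b\in(0,1)$ instead of your explicit $J_i=\emptyset$ convention.
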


\begin{proof}
	Let $a,b \in (0,1)$. Since $(x_n)_{n \geq 1}$ has a dense collection of bounded remainder sets, for every $\varepsilon > 0$ there exist $\underline{a}, \bar{a}, \underline{b}, \bar{b}\in (0,1)$ such that 
	\begin{align*}
	\underline{a} \leq a \leq \bar{a}, \quad  a - \underline{a} &\leq \varepsilon, \quad \bar{a} - a \leq \varepsilon, \\
	\underline{b} \leq b \leq \bar{b}, \quad  b - \underline{b} &\leq \varepsilon, \quad \bar{b} - b \leq \varepsilon,
	\end{align*}
	and $[\underline{a}, \bar{b})$, $[\bar{a},\underline{b})$ are bounded remainder sets.
	From
	\begin{align*}\\
	\#\{1 \leq n \leq N: x_n \in [\underline{a},\bar{b})\} 
	&\geq \#\{1 \leq n \leq N: x_n \in [a,b)\} \\
	&\geq \#\{1 \leq n \leq N: x_n \in [\bar{a},\underline{b})\} 
	\end{align*}
	for all $N \in \N$, it follows that
	\begin{align*}
	\lim\limits_{N \rightarrow \infty} \frac{\#\{1 \leq n \leq N: x_n \in [\underline{a},\bar{b})\}}{N}	
	&\geq \lim\limits_{N \rightarrow \infty} \frac{\#\{1 \leq n \leq N: x_n \in [a,b)\}}{N} \\
	&\geq \lim\limits_{N \rightarrow \infty} \frac{\#\{1 \leq n \leq N: x_n \in [\bar{a},\underline{b})\}}{N}.
	\end{align*}
	By the definition of bounded remainder sets it holds that 
	\begin{equation*}
	\lim\limits_{N \rightarrow \infty} \frac{\#\{1 \leq n \leq N: x_n \in [\underline{a},\bar{b})\}}{N} = \bar{b} - \underline{a} \leq b - a + 2\varepsilon,
	\end{equation*}
	and analogously
	\begin{equation*}
	\lim\limits_{N \rightarrow \infty} \frac{\#\{1 \leq n \leq N: x_n \in [\bar{a},\underline{b})\}}{N} = \underline{b} - \bar{a} \geq b - a - 2\varepsilon.
	\end{equation*}
	Since $\varepsilon$ was arbitrary small, we have that
	\begin{equation*}
	\lim\limits_{N \rightarrow \infty} \frac{\#\{1 \leq n \leq N: x_n \in [a,b)\}}{N} = b - a,
	\end{equation*}
	and therefore $(x_n)_{n \geq 1}$ is uniformly distributed.
\end{proof}

\begin{refproof}[Proof of Theorem \ref{thm:lacunary_brs}.]
	For the proof of the first part, our goal is to show that there exists an index $k$ such that
	\begin{equation}
	\label{eq:equality_of_numbers}
	\#\{k \leq n \leq N: \{a_n\alpha\} \in J\} = \#\{k \leq n \leq N: \{n\alpha\} \in J\}
	\end{equation}
	for all $N \in \N$ where $J = [\{k_1\alpha\},\{k_2\alpha\})$ with $k_1,k_2 \in \N$ and $\{k_1\alpha\} < \{k_2\alpha\}$ and where $a_n = n + q_n$ with $q_n$ the denominator from the $n$-th convergent of the continued fraction expansion of $\alpha$.
	Since $J$ is a BRS for $(\{n\alpha\})_{n \geq 1}$ this implies that
	\begin{equation*}
	|\#\{1 \leq n \leq N: \{a_n\alpha\} \in J\} - \lambda(J)N| \leq c + k
	\end{equation*}
	for all $N \in \N$ with the right side of the inequality independent of $N$ and where $\lambda(J) = \{k_2\alpha\} - \{k_1\alpha\}$.
	
	Equation (\ref{eq:equality_of_numbers}) is fulfilled if we are able to show that the point $\{a_n\alpha\}, n \geq k$ is a suitable approximation of $\{n\alpha\}$. More precisely, if the distance between $\{a_n\alpha\}$ and $\{n\alpha\}$ is smaller than the distance between the boundaries of $J$ and $\{n\alpha\}$.
	
	Let us begin with the distance between $\{a_n\alpha\}$ and $\{n\alpha\}$. This distance is given by
	\begin{align*}
	\| a_n\alpha - n\alpha \| &= \| (q_n + n)\alpha - n\alpha \| \\
	&= \| q_n\alpha \|.
	\end{align*}
	On the other hand, the distance between $\{n\alpha\}$ and the boundaries of $J$ can be estimated by the minimal distance between any two elements $0, \{\alpha\}, \{2\alpha\},...,\{n\alpha\}$ if $\{k_1\alpha\}$ and $\{k_2\alpha\}$ are contained in those points, i.e. if $n > \max(k_1, k_2)$. For this minimal distance it holds that
	\begin{align*}
	\min_{0 \leq l_1 < l_2 \leq n} \|l_2\alpha - l_1\alpha \| 
	&= \min_{0 \leq l_1 < l_2 \leq n} \| \underbrace{(l_2-l_1)}_{\in \{1, \dots, n\}}\alpha  \| \\
	&= \min_{1 \leq l \leq n}\|l\alpha\|.
	\end{align*}
	
	Since the convergents of the continued fraction expansion of $\alpha$ provide best rational approximations, it holds that
	\begin{equation*}
	\|q_n\alpha\| < \min_{1 \leq l \leq n}\|l\alpha\|,
	\end{equation*}
	if $q_n > n$ which is certainly the case for $n \geq 4$.
	
	Summing up, we have to choose the index $k$ as $k = \max(k_1+1,k_2+1,4)$ such that the boundaries of $J$ are contained in the considered points and to guarantee the approximation property. For this choice of $k$, equation (\ref{eq:equality_of_numbers}) holds and $J$ therefore is a bounded remainder set for the sequence $(\{a_n\alpha\})_{n\geq 0})$. The uniform distribution of this sequence directly follows with Lemma \ref{lma:dcbrs_ud} since the intervals $J$ form a dense collection of BRS.
	
	\ \\
	
	For the proof of the second part, we consider $\alpha = [0;1,1,\bar{2}]$ and show that some interval $J = [a, a+ \alpha)$, which is a BRS for $\na$ is not a BRS for the sequence $(\{(q_n + n)\alpha\})_{n\geq 1})$. More precisely, we show the existence of an $a \in \R$, obviously $a \neq \{j\alpha\}$ for any $j \in \Z$, for which
	\begin{equation}
	\label{eq:infinit}
	\{n\alpha\} \notin J \text{ but } \{(q_n + n)\alpha\} \in J \qquad \text{for infinitely many } n \in \N
	\end{equation}
	and
	\begin{equation}
	\label{eq:finite}
	\{n\alpha\} \in J \text{ but } \{(q_n + n)\alpha\} \notin J \qquad \text{for only finitely many } n \in \N.
	\end{equation}
	This would imply that $|\#\{1 \leq n \leq N: \{(q_n + n)\alpha\} \in J\} - \alpha N|$ is unbounded and that $J$ is not a BRS for $(\{(q_n + n)\alpha\})_{n\geq 0})$ whereas it is a BRS for $\na$.
	
	Note that because of the special form of $\alpha$, the denominator $q_n$ of the $n$-th convergent of $\alpha$, is even if $n$ is even and odd if $n$ is odd.
	Moreover, it is a well known fact that $p_n/q_n$ is smaller than $\alpha$ for every
	even value of $n$ and greater for every odd value of $n$, which implies that $0 < \{q_n\alpha\} < 1/2$ for $n$ even and $1/2 < \{q_n\alpha\} < 1$ for $n$ odd. From the proof of the first part we know that the distance between $\{n\alpha\}$ and $\{(q_n+n)\alpha\}$ is given by $\|q_n\alpha\|$.
	Hence, 
	\begin{align*}
	\{n\alpha\} &< \{(q_n+n)\alpha\} \quad \text{if } n \text{ is even},\\
	\{n\alpha\} &> \{(q_n+n)\alpha\} \quad \text{if } n \text{ is odd}.
	\end{align*}
	
	To begin with, we want to construct $a$ such that there exists a subsequence $(\{n_i\alpha\})_{i \geq 1}$ for which it holds that 
	\begin{equation}
	\label{eq:a}
	\{n_i\alpha\} < a \text{ and } a \leq \{(q_{n_i}+n_i)\alpha\} < a + \alpha.
	\end{equation}
	This would guarantee property (\ref{eq:infinit}).
	Therefore, choose any even $n_1 \in \N$ such that $\{(q_{n_1}+n_1)\alpha\} + \alpha < 1$ and with $\{q_{n_1}\alpha\} < \alpha$. The first condition is necessary because from (\ref{eq:a}) we know that $a \in (\{n_1\alpha\}, \{(q_{n_1}+n_1)\alpha\}]$ and it therefore guarantees that $a < a + \alpha$. The condition on $\{q_{n_1}\alpha\}$ ensures that $\{(q_{n_i}+n_i)\alpha\} < a + \alpha$.
	
	Now, we want to choose $n_2$ such that $\{n_2\alpha\}$ lies between $\{n_1\alpha\}$ and $\{(q_{n_1}+n_1)\alpha\}$. It follows from the approximation property of $q_n$ that $n_2 = n_1 + q_{n_1+2}$ is a suitable choice. In fact, of the first $n_2$ elements of $\na$, $\{n_2\alpha\}$ is the one closest to $\{n_1\alpha\}$. Moreover, $n_2$ again is an even number which implies that 
	\begin{equation*}
	\{n_1\alpha\} < \{n_2\alpha\} < \{(q_{n_2}+n_2)\alpha\} < \{(q_{n_1}+n_1)\alpha\}.
	\end{equation*}
	Therefore, by (\ref{eq:a}) it has to hold that $a \in (\{n_2\alpha\}, \{(q_{n_2}+n_2)\alpha\}]$. In the same manner, $n_2$ now can be used to construct $n_3$ and in general we obtain
	\begin{equation*}
	n_{i+1} = n_i + q_{n_i + 2}.
	\end{equation*}
	This subsequence satisfies
	\begin{equation*}
	\{n_i\alpha\} < \{n_{i+1}\alpha\} < \{(q_{n_{i+1}}+n_{i+1})\alpha\} < \{(q_{n_i}+n_i)\alpha\}
	\end{equation*}
	for all $i \in \N$.
	Hence, equation (\ref{eq:a}) is fulfilled if we choose
	\begin{equation*}
	a = \lim\limits_{n \rightarrow \infty} \{(q_{n_i}+n_i)\alpha\}
	\end{equation*}
	and it holds that 
	\begin{equation*}
	\{n_i\alpha\} \notin [a,a+\alpha) \quad \text{ but } \quad \{(q_{n_i} + n_i)\alpha\} \in [a,a+\alpha).
	\end{equation*}
	\begin{figure}[h]
		\centering
		\includegraphics[width=1\textwidth]{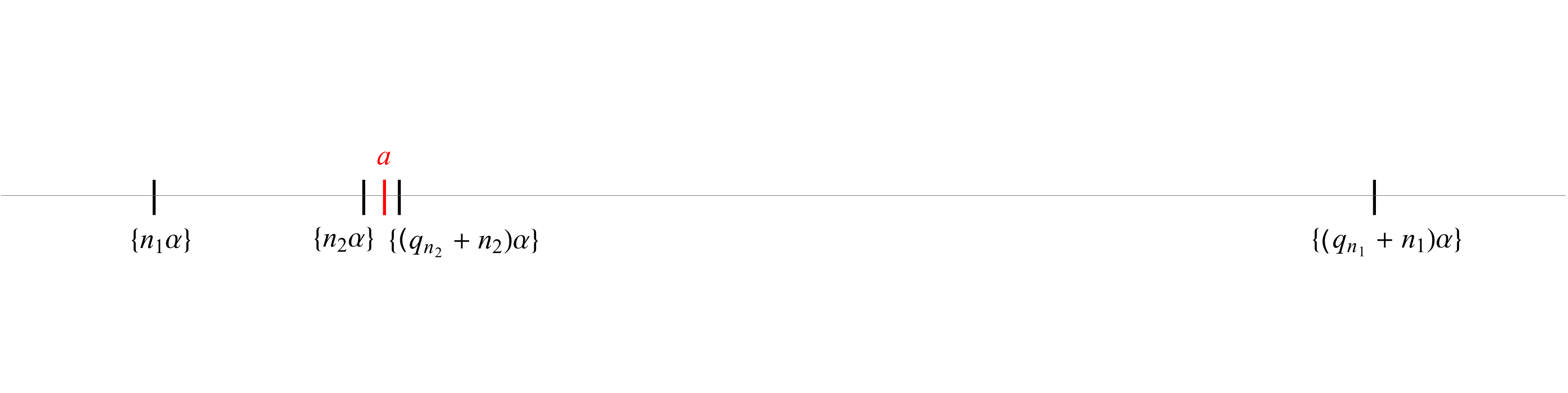}
		\caption{Illustration of $n_1$ and $n_2$}\label{fig:counterexample}
	\end{figure}
	
	It remains to check property (\ref{eq:finite}).
	First, we want to show that there is no point $\{n\alpha\}$ with $n > n_1$ for which it holds that 
	\begin{equation*}
	\{n\alpha\} > a \quad \text{ and } \quad \{(q_{n}+n)\alpha\} < a.
	\end{equation*}
	This can only be the case if $n$ is odd.
	Let $i \in \N$ be the index such that $n_i < n < n_{i+1}$.
	It holds that $\{n\alpha\} > \{n_{i+1}\alpha\}$. Additionally, $\{n\alpha\} > \{(q_{n_i}+n_i)\alpha\}$ and $\{(q_n + n)\alpha\} > \{(q_{n_i}+n_i)\alpha\}$ because otherwise
	$\{n\alpha\}$ and $\{(q_n + n)\alpha\}$ would be a better approximation of $\{n_{i+1}\alpha\}$ than $\{(q_{n_i}+n_i)\alpha\}$ which cannot be the case since $q_{n_i}+n_i > q_n + n$ and $q_{n_i}$ is a best approximation denominator.
	Therefore, for all $n \in \N$, $n > n_1$ it holds that
	\begin{equation}
	\label{eq:first}
	\{n\alpha\} > a \implies \{(q_{n}+n)\alpha\} > a.
	\end{equation}
	
	Secondly, we also have to check the right boundary of the interval $J$, that means we want to show that there are only finitely many $n$ with the property
	\begin{equation}
	\label{eq:a+alpha}
	\{n\alpha\} < a + \alpha \quad \text{ and } \quad \{(q_{n}+n)\alpha\} \geq a + \alpha.
	\end{equation}
	Note that if (\ref{eq:a+alpha}) holds then $n$ has to be even and that the distance between $\{n\alpha\}$ and $\{(q_{n}+n)\alpha\}$ is given by 	$\{q_n\alpha\}$. Therefore, the distance between $\{n\alpha\}$ and $a + \alpha$ is at most $\{q_n\alpha\}$. If $a > \{q_n\alpha\}$, which is certainly fulfilled for $n$ large enough (e.g. if $q_n > n_1$), we also obtain information about the distance of $\{(n-1)\alpha\}$ to the left boundary of $J$, namely
	\begin{equation}
	\label{eq:left_boundary}
	0 < a - \{(n-1)\alpha\} < \{q_n\alpha\}.
	\end{equation}
	For those $n$, we again have to consider two cases, depending on the position of \linebreak
	$\{(n-1)\alpha\}$.
	The first one is the case $\{(n-1)\alpha\} < \{n_1\alpha\}$. This however implies that 
	\begin{equation*}
	a - \{(n-1)\alpha\} > a - \{n_1\alpha\}.
	\end{equation*}
	Since $a - \{n_1\alpha\} > \{n_2\alpha\} - \{n_1\alpha\} = \{q_{n_1 + 2}\alpha\}$ (this can be easily seen from Figure \ref{fig:counterexample}) we have that 
	\begin{equation*}
	\{q_n\alpha\} > \{q_{n_1 + 2}\alpha\}.
	\end{equation*}
	Both, $n$ and $n_1 + 2$ are even numbers, therefore this implies that $n \leq n_1$.
	
	For the second case, $\{(n-1)\alpha\} > \{n_1\alpha\}$, observe that the points $\{n_i\alpha\}$ can get arbitrarily close to $a$. Therefore, for all $n > n_1$ which fulfill (\ref{eq:a+alpha}) there is an $i \in \N$ such that $\{n_i\alpha\} < \{(n-1)\alpha\} < \{n_{i+1}\alpha\}$.
	As in the previous case, from the right side of (\ref{eq:left_boundary}) and because $a - \{n_{i+1}\alpha\} > \{n_{i+2}\alpha\} - \{n_{i+1}\alpha\} = \{q_{n_{i+1}+2}\alpha\}$ it follows that
	\begin{equation*}
	\{q_n\alpha\} > \{q_{n_{i+1}+2}\alpha\},
	\end{equation*}
	which implies that $n \leq n_{i+1}$.
	Since of the first $n_{i+1}$ elements of $\na$, $\{n_{i+1}\alpha\}$ is the closest one to $\{n_{i}\alpha\}$, additionally it has to hold $n > n_{i+1}$. Thus, such an $n$ cannot exist.
	
	Summing up, for all $n \in \N$, $n > n_1$ it holds that
	\begin{equation}
	\label{eq:second}
	\{n\alpha\} < a + \alpha \implies \{(q_n + n)\alpha\} < a + \alpha
	\end{equation}
	By putting (\ref{eq:first}) and (\ref{eq:second}) together we obtain that
	\begin{equation*}
	\{n\alpha\} \in [a, a+ \alpha) \implies \{(q_n +n)\alpha\} \in [a, a+ \alpha)
	\end{equation*}
	for all $n > n_1$. Thus, property (\ref{eq:finite}) holds and the proof is complete.
\end{refproof}

\begin{refproof}[Proof of Theorem \ref{thm:slow_growth_rate}.]
	Let $\ab \subset \ui$.	
	The condition that the increasing integer sequence $(a_n)\lc$ grows slower than linearly implies that for all $k \in \N$ there exists $N_k \in \N$ such that $a_{N_k+1} = a_{N_k + 2} = \dots = a_{N_k + k}$.
	Thus, for each $k$ the elements $\{a_n\alpha\}$, with $N_k < n \leq N_k + k$ are either all contained or not contained in $\ab$ which therefore is an S-NBRS.
\end{refproof}

\begin{refproof}[Proof of Theorem \ref{thm:arbitrary_growth_rate_Brs}.]
	Let $\alpha$ have bounded continued fraction coefficients, i.e. 
	\linebreak $\alpha = [0;\alpha_1,\alpha_2,...]$ with $\alpha_i \leq L-1$ for all $i \in \N$ and let $q_n$ denote the $n$-th best approximation denominator of $\alpha$.
	Consider the sequence $a_n = q_{n'} + n$ where for each $n$ the parameter $n'$ is chosen such that
	\begin{equation*}
	q_{n' - 1} + n < \varphi(n) \leq q_{n'} + n.
	\end{equation*}
	It holds that
	\begin{align*}
	\varphi (n) \leq  a_n = q_{n'} + n 
	&= \alpha_{n'}q_{n' -1} + q_{n' - 2} + n \\
	&\leq (L-1)q_{n' -1} + q_{n' - 2} + n \\
	&\leq Lq_{n' -1} + n\\
	&< L (\varphi(n) - n) + n \\
	&< L \varphi(n).
	\end{align*}
	Moreover, we already know from the proof of the first part of Theorem \ref{thm:lacunary_brs} that $(\{a_n\alpha\})_{n \geq 1}$ has a dense collection of BRS if the distance between $\{a_n\alpha\}$ and $\{n\alpha\}$, which is given by $\|\{q_{n'}\alpha\}\|$, is smaller than $\min_{1 \leq l \leq n} \|\{l\alpha\}\|$ for all $n \in \N$.
	Therefore, from the approximation property of $q_{n'}$ and from the fact that
	\begin{equation*}
	q_{n'} \geq \varphi(n) - n > n
	\end{equation*}
	it follows that $(\{a_n\alpha\})_{n \geq 1}$ has a dense collection of BRS. Thus, by Lemma \ref{lma:dcbrs_ud} $\ana$ is also uniformly distributed.
\end{refproof}

The proof of Theorem \ref{thm:countably_many_alpha} basically follows the same idea as the proof of the first part of Theorem \ref{thm:lacunary_brs}, $a_n$ is again of the form $q_n + n$. However, with the difference that $q_n$ now does not any more denote the denominator of the $n$-th convergent from the continued fraction expansion of some single $\alpha$. 
Instead, we will use the simultaneous version of Dirichlet's Approximation Theorem (see e.g. \cite{Hardy1975}): \newline
For all irrational $\alpha_1, ...., \alpha_k$ and all $N \in \N$ there exist integers $p_1,...,p_k$ and $1 \leq q \leq N$ such that
\begin{equation*}
\left|\alpha_i - \frac{p_i}{q}\right| \leq \frac{1}{q N^{1/k}}.
\end{equation*}
In particular, for any $\alpha_1, ..., \alpha_k$ and any $\varepsilon > 0$ it is possible to find an integer $q$ such that $\| q\alpha_i \|$ is less than $\varepsilon$ for every $i = 1, \dots, k$.

\begin{refproof}[Proof of Theorem \ref{thm:countably_many_alpha}.]
	Let $\mathcal{A} = \{\alpha_1, \alpha_2, \dots\}$ with $\alpha_i$, $i \in \N$ irrational. We show that there exists a sequence $(a_n)_{n \geq 1}$ such that for all $i = 1, 2, \dots$ and all $k_1,k_2 \in \N_0$ with $\{k_2\alpha_i\} > \{k_1\alpha_i\}$, the interval
	\begin{equation*}
	J_i = [\{k_1\alpha_i\},\{k_2\alpha_i\}),
	\end{equation*}
	is a BRS for the sequence $(\{a_n\alpha_i\})_{n \geq 1}$.
	
	The minimal pairwise distance between any two of the first $k$ elements of $(\{n\alpha\})_{n \geq 1}$ is certainly at least $\min_{1 \leq l \leq k} \| l\alpha \|$.
	Since we now have to deal with countably many $\alpha_i$ we define 
	\begin{equation*}
	\varepsilon_k := \min_{1 \leq n,i \leq k} \| n\alpha_i \|.
	\end{equation*}
	
	From the simultaneous version of Dirichlet's Approximation Theorem, for all of those $\varepsilon_k$ there exists a $q_k$ such that 
	\begin{equation*}
	\| \{q_k\alpha_i\} \| < \varepsilon_k
	\end{equation*}
	for all $i  = 1,...,k$.
	Since $k$ was arbitrary, this approach defines an increasing integer sequence $(q_k)_{k \geq 1}$.
	It can be easily shown that for any $i \in \N$, intervals of the form $J = [\{k_1\alpha_i\},\{k_2\alpha_i\})$ with $k_1, k_2 \in \N$ are bounded remainder sets for the sequence $(\{(q_n + n)\alpha_i\})_{n \geq 1}$:
	
	Consider again the sequences $(\{n\alpha_i\})_{n \geq 1}$ and $(\{(q_n + n)\alpha_i\})_{n \geq 1}$. Then, for all $k \geq \max(k_1 +1,k_2 + 1,i)$ it holds that the boundaries of $J$ are contained in the first $k$ elements of $(\{n\alpha\})$. Moreover, the minimal distance of the element $\{k\alpha_i\}$ to the boundaries of $J$, which is greater than $\varepsilon_k$, is larger than the distance between $\{k\alpha_i\}$ and $\{(q_k +k)\alpha_i\}$, which is given by $\| q_k\alpha_i \|$. Therefore, it holds for every $i \in \N$ that
	\begin{equation*}
	\#\{K \leq n \leq N: \{a_n\alpha_i\} \in J\}
	= \#\{K \leq n \leq N: \{n\alpha_i\} \in J\},
	\end{equation*}
	for all $N \in \N$ where $K = \max(k_1 +1,k_2 + 1,i)$. This implies that  
	\begin{equation*}
	|\#\{1 \leq n \leq N: \{a_n\alpha_i\} \in J\} - \lambda(J)N| \leq c
	\end{equation*}
	for all $N \in \N$ and with a constant $c$ independent of $N$. The intervals $J$ form a dense collection of BRS such that by Lemma \ref{lma:dcbrs_ud} the sequences $(\{a_n\alpha_i\})_{n \geq 1}$ are uniformly distributed.
\end{refproof}

%
%
%
%
%
%

\section{Proof of Theorem \ref{thm:sn-brs_polynomial}}
\label{sec:proof_2}

This section is dedicated to the proof of Theorem \ref{thm:sn-brs_polynomial} for which two auxiliary results are needed. The proofs of the lemmas as well as of the theorem basically follow the same idea, nevertheless, it is necessary to perform them separately.

\begin{lemma}
	\label{thm:different_alpha_sequence}
	Let $q \in \N$, $\alpha_1, \alpha_2, \dots, \alpha_q$ be irrational and linearly independent over $\mathbb{Q}$. Let $(\bm{x}_n)\lc = (\{n\alpha_1\}, \{n \alpha_2\}, \dots, \{n\alpha_q\})\lc$.
	Then, for all intervals $[\bm{0},\bm{b}) = [0,b_1)\times [0, b_2) \times \dots \times [0, b_q) \subset [0,1)^q$ of positive measure there exists $K \in \N$ such that at least one of $K$ consecutive elements of $(\bm{x}_n)\lc$ is contained in $[\bm{0},\bm{b})$.
\end{lemma}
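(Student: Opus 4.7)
The plan is to reduce the statement to a standard minimality argument for a rotation on the torus. I read the hypothesis as the Weyl equidistribution condition, i.e.\ that $1,\alpha_1,\dots,\alpha_q$ are linearly independent over $\mathbb{Q}$ (as my later topological step would genuinely need this---without it the orbit closure can be a proper subgroup of $\mathbb{T}^q$ that misses the box entirely, e.g.\ for $\alpha_2=1-\alpha_1$ and $b_1+b_2<1$). Under this assumption, Kronecker's theorem tells us that the rotation
\[
R\colon\mathbb{T}^q\to\mathbb{T}^q,\qquad R(\bm{y})=\bm{y}+\bm{\alpha}\bmod\bm{1},
\]
is a \emph{minimal} homeomorphism of the compact space $\mathbb{T}^q$, and $(\bm{x}_n)\lc$ is precisely the forward orbit of $\bm{0}$ under $R$.

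The next step is a small topological observation. The half-open box $[\bm{0},\bm{b})$ is \emph{not} open in $\mathbb{T}^q$, because its corner $\bm{0}$ is a boundary point when $[0,1)^q$ is identified with the torus; however, since each $b_i>0$, the box contains the non-empty set
\[
U:=(0,b_1)\times(0,b_2)\times\cdots\times(0,b_q),
\]
which \emph{is} open in $\mathbb{T}^q$. It therefore suffices to produce $K\in\mathbb{N}$ such that every block of $K$ consecutive terms of $(\bm{x}_n)\lc$ meets $U$.

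The final step is the well-known syndetic-return-times property of a minimal system, proved by a standard compactness argument. For every $\bm{y}\in\mathbb{T}^q$, minimality supplies some $n(\bm{y})\in\mathbb{N}$ with $R^{n(\bm{y})}(\bm{y})\in U$; continuity of $R^{n(\bm{y})}$ then yields an open neighbourhood $V_{\bm{y}}\ni\bm{y}$ on which the inclusion persists. Compactness of $\mathbb{T}^q$ extracts a finite subcover $V_{\bm{y}_1},\dots,V_{\bm{y}_\ell}$; set $K:=\max_{j\le\ell}n(\bm{y}_j)$. Given any $N\in\mathbb{N}$, the point $\bm{x}_N$ lies in some $V_{\bm{y}_j}$, so $\bm{x}_{N+n(\bm{y}_j)}\in U\subset[\bm{0},\bm{b})$ and $n(\bm{y}_j)\le K$, as required.

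I expect the only real obstacle to be the topological subtlety that $[\bm{0},\bm{b})$ fails to be open at $\bm{0}$: once one notices that the inner open sub-box $U$ is all that matters for a lower bound on hits, the minimality/compactness step is routine. A more elementary proof in the spirit of the rest of the paper would be to combine Kronecker's theorem (to place one $\bm{x}_{n_0}$ well inside $[\bm{0},\bm{b})$) with the simultaneous Dirichlet theorem already used in the proof of Theorem \ref{thm:countably_many_alpha} (to bound the spacing between nearby orbit points); the formulation above, however, is shorter and more transparent.
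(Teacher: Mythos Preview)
Your argument is correct, and your reading of the hypothesis is the right one: the paper uses uniform distribution of $(\bm{x}_n)_{n\ge1}$ in its own proof, which of course needs $1,\alpha_1,\dots,\alpha_q$ linearly independent over $\mathbb{Q}$, so your caveat is well placed (and your counterexample for the weaker reading is apt).

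Your route, however, is genuinely different from the paper's. You invoke minimality of the toral rotation and run the standard syndetic--return--times compactness argument; this works for \emph{any} minimal homeomorphism of a compact metric space and never uses the additive structure of $\mathbb{T}^q$. The paper instead exploits precisely that structure: it fixes a grid of cubes $\mathcal{I}(\bm{j})$ of side $1/l$ with $[\bm{0},\bm{b})\supseteq[0,2/l)^q$, records for each cube the first index $K_{\bm{j}}$ with $\bm{x}_{K_{\bm{j}}}\in\mathcal{I}(\bm{j})$ (using only density of the orbit), sets $K=\max_{\bm{j}}K_{\bm{j}}$, and then uses the identity $\bm{x}_{N+k}\equiv\bm{x}_N+\bm{x}_k$ to translate the problem back to the first $K$ points: if $\bm{x}_{N+k}\notin[\bm{0},\bm{b})$ for $k=1,\dots,K$, then $\bm{x}_k\notin[-\bm{x}_N,\bm{b}-\bm{x}_N)\pmod 1$, an interval of side lengths $\ge 2/l$ that must cover some $\mathcal{I}(\bm{j})$, contradicting the definition of $K$. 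In effect the paper hand--builds the finite cover that your compactness step produces abstractly. What your approach buys is brevity and conceptual clarity; what the paper's approach buys is a fully elementary proof with an explicit $K$, and---more importantly for the sequel---a template (grid, translation, ``first hitting index'') that is reused verbatim in the induction of Lemma~\ref{lma:combination} and in the proof of Theorem~\ref{thm:sn-brs_polynomial}, where the shifts $n\mapsto n+M$ are no longer group translations on the torus and the pure minimality argument is not directly available.
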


\begin{proof}
	Let $(\bm{x}_n)\lc = (\{n\alpha_1\}, \{n \alpha_2\}, \dots, \{n\alpha_q\})\lc$, where $\alpha_1, \dots, \alpha_q$ are irrational and linearly independent over $\mathbb{Q}$.
	To begin with, we partition the interval $[0,1)^q$ into $l^q$ subintervals of volume $l^{-q}$, i.e. intervals of the form
	\begin{equation*}
	\mathcal{I}(j_1, \dots, j_q) = \left[\frac{j_1}{l},\frac{j_1+1}{l}\right) \times \dots \times \left[\frac{j_q}{l},\frac{j_q+1}{l}\right),
	\end{equation*}
	with $j_1, \dots, j_q \in \{0, 1, \dots, l-1\}$ where $l$ is chosen such that 
	\begin{equation}
	\label{eq:supset_b}
	[\bm{0}, \bm{b}) \supseteq \left[0,\frac{2}{l} \right)^q.
	\end{equation}
	
	Since $(\bm{x}_n)\lc$ is uniformly distributed, each interval of the form $\mathcal{I}(j_1, \dots, j_q)$ contains elements of $(\bm{x}_n)\lc$. For each $\bm{j} = (j_1, \dots, j_q) \in \{0, 1, \dots, l-1\}^q$, by $K_{\bm{j}}$ we denote the index of the first element of $(\bm{x}_n)\lc$ which lies in $\mathcal{I}(\bm{j})$.
	Now define
	\begin{equation*}
	K_{\bm{\alpha},\bm{b}} =\max_{\bm{j} \in \{0,1, \dots, l-1\}^q}K_{\bm{j}}.
	\end{equation*}
	Then, assume that for any $N \in \N$ we have 
	\begin{equation}
	\label{eq:elements}
	x_{N+k} \notin [\bm{0},\bm{b})
	\end{equation}
	for all $k = 1, \dots, K_{\bm{\alpha},\bm{b}}$. Note that if $c$ is a constant, it holds that $\{x + c\} \in [a,b)$ is equivalent to $\{x \} \in [a - c, b - c) \pmod 1$ where we use the notation
	\begin{equation*}
	[a,b) \pmod 1 :=
	\begin{cases}
	[a \pmod 1, b \pmod 1) \quad &\text{if } a \pmod 1 > b \pmod 1 \\
	[a \pmod 1, 1) \cup [0, b \pmod 1) \quad &\text{if } a \pmod 1 < b \pmod 1.
	\end{cases}
	\end{equation*}
	Therefore, $\bm{x}_{N + k} \notin [\bm{0}, \bm{b})$ implies that
	\begin{equation*}
	\bm{x}_k \notin [-\bm{x}_N, \bm{b} -\bm{x}_N) \pmod 1
	\end{equation*}
	for all $k = 1, \dots, K_{\bm{\alpha},\bm{b}}$. 
	Because of \ref{eq:supset_b} the interval $[-\bm{x}_N, \bm{b} -\bm{x}_N) \pmod 1$ fully covers at least one interval of the form $\mathcal{I}(j_1, \dots, j_q)$ which has to contain at least one of the first $K_{\bm{\alpha},\bm{b}}$ points of $(\bm{x}_n)\lc$. Thus, we have a contradiction to (\ref{eq:elements}).
\end{proof}

\begin{lemma}
	\label{lma:combination}
	Let $p,q \in \N$, $\alpha_1, \alpha_2, \dots, \alpha_q$ be irrational and linearly independent over $\mathbb{Q}$. Let $(\bm{x}_n)\lc$ be the $pq$-dimensional sequence whose components have the form $\{n^{p'}\alpha_{q'}\}$ with $1 \leq p' \leq p$ and $1 \leq q' \leq q$.
	Then, for all intervals $[\bm{0},\bm{b}) \subset [0,1)^{pq}$ there exists $K \in \N$ such that at least one of $K$ consecutive elements of $(\bm{x}_n)\lc$ is contained in $[\bm{0},\bm{b})$.
\end{lemma}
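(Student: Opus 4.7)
The plan is to exploit a semigroup structure: the shift $\bm{x}_{N+j}$ can be written as $\Psi_j(\bm{x}_N)$ for a torus map $\Psi_j$ that depends only on the offset $j$, and the lemma then reduces to equidistribution of a polynomial orbit starting from an arbitrary point of the torus. Expanding $(N+j)^{p'}\alpha_{q'}$ in powers of $N$ by the binomial theorem and reducing modulo $1$ yields
\[
\{(N+j)^{p'}\alpha_{q'}\} \;=\; \Bigl\{ j^{p'}\alpha_{q'} + \sum_{i=1}^{p'}\binom{p'}{i}\, j^{\,p'-i}\,\{N^{i}\alpha_{q'}\} \Bigr\},
\]
so that $\bm{x}_{N+j}=\Psi_j(\bm{x}_N)$ for the continuous map $\Psi_j:[0,1)^{pq}\to[0,1)^{pq}$ whose $(p',q')$-component at a generic point $\bm{u}=(u^{(i,q')})$ is $\{j^{p'}\alpha_{q'}+\sum_{i=1}^{p'}\binom{p'}{i}j^{\,p'-i}u^{(i,q')}\}$. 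The linear part of $\Psi_j$ is block diagonal in $q'$ with integer unit lower-triangular blocks, so $\Psi_j$ is a bijection of the torus, and only the offset $j$ enters the formula (not $N$).

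Next I would show that for every fixed base point $\bm{u}\in[0,1)^{pq}$ the orbit $(\Psi_j(\bm{u}))_{j\geq 0}$ is uniformly distributed on $[0,1)^{pq}$. Fix $\bm{h}\in\Z^{pq}\setminus\{\bm{0}\}$ and let $p^{\ast}:=\max\{p' : (h_{p',1},\ldots,h_{p',q})\neq\bm{0}\}$. Collecting $\langle\bm{h},\Psi_j(\bm{u})\rangle$ by powers of $j$ shows that this inner product is a polynomial in $j$ of degree $p^{\ast}$ whose coefficient of $j^{\,p^{\ast}}$ equals $\sum_{q'=1}^{q} h_{p^{\ast},q'}\,\alpha_{q'}$. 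The key observation is that this leading coefficient is independent of $\bm{u}$, and it is irrational by the $\mathbb{Q}$-linear independence of $1,\alpha_1,\ldots,\alpha_q$ together with $(h_{p^{\ast},1},\ldots,h_{p^{\ast},q})\neq\bm{0}$. Weyl's theorem for polynomial sequences then gives $\frac{1}{K}\sum_{j=1}^{K}e^{2\pi i\langle\bm{h},\Psi_j(\bm{u})\rangle}\to 0$, so Weyl's criterion delivers the required equidistribution.

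I would then close by a compactness argument. Suppose the lemma fails: for every $K\in\N$ pick $N_K$ with $\bm{x}_{N_K+j}\notin[\bm{0},\bm{b})$ for all $j=1,\ldots,K$. Viewing the points $\bm{u}_K:=\bm{x}_{N_K}$ in the compact torus, some subsequence $\bm{u}_{K_m}$ converges to a limit $\bm{u}_{\ast}$. For every fixed $j\geq 1$, as soon as $K_m\geq j$ we have $\Psi_j(\bm{u}_{K_m})=\bm{x}_{N_{K_m}+j}\notin[\bm{0},\bm{b})$, and continuity of $\Psi_j$ forces $\Psi_j(\bm{u}_{\ast})$ to lie in the complement of the open box $(\bm{0},\bm{b})$. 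Hence the orbit $(\Psi_j(\bm{u}_{\ast}))_{j\geq 1}$ avoids an open set of positive Lebesgue measure, contradicting its equidistribution.

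The main difficulty that distinguishes this argument from Lemma \ref{thm:different_alpha_sequence} is that $\Psi_j$ is not a translation but a sheared affine map, so the preimage $\Psi_j^{-1}([\bm{0},\bm{b}))$ is a parallelotope that for large $j$ is too sheared to contain any small cube of the partition used there. Compactness circumvents this entirely, at the price of needing the Weyl criterion to hold uniformly over the base point $\bm{u}$; verifying that the highest-degree Weyl coefficient is genuinely $\bm{u}$-independent is therefore the crux of the argument.
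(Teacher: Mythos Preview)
Your argument is correct and takes a genuinely different route from the paper. The paper proceeds by induction on $p$: having partitioned $[0,1)^{pq}$ into cubes of side $1/l$ and recorded an index $K_{\bm{\alpha},\bm{b}}$ by which every cube has been visited by $(\bm{x}_k)$, it invokes the case $p-1$ to locate, among any $L$ consecutive integers, some $M$ with $\|M^{p'}\alpha_{q'}\|\le K_{\bm{\alpha},\bm{b}}^{-p}$ for all $p'\le p-1$ and all $q'$; the binomial expansion of $(M+k)^{p'}\alpha_{q'}$ then has cross terms of size at most $\binom{p'}{j}/l$, so the question reduces to whether $(\bm{x}_k)_{k\le K_{\bm{\alpha},\bm{b}}}$ hits a specific cube, which it does by construction. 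You bypass the induction entirely by observing that the shift-by-$j$ map $\Psi_j$ is a unipotent affine torus endomorphism, and that for every $\bm{h}\neq\bm{0}$ the polynomial $j\mapsto\langle\bm{h},\Psi_j(\bm{u})\rangle$ has leading coefficient $\sum_{q'}h_{p^*,q'}\alpha_{q'}$ \emph{independent of} $\bm{u}$; Weyl's theorem then gives equidistribution of $(\Psi_j(\bm{u}))_j$ for every base point at once, and torus compactness replaces the explicit box-counting. Your approach is shorter and more conceptual, handling all degrees simultaneously; the paper's is more elementary (it only ever uses equidistribution of $(\bm{x}_n)$ itself, not of the auxiliary orbits) and is in principle effective, whereas your compactness step yields no explicit bound on $K$. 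Both proofs tacitly require $1,\alpha_1,\dots,\alpha_q$ to be linearly independent over $\mathbb{Q}$, which is the intended reading of the hypothesis.
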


\begin{proof}
	The elements of the sequence $(\bm{x}_n)\lc$ are given by
	\begin{equation*}
	\bm{x}_n = (\{n\alpha_1\}, \{n^2\alpha_1\}, \dots, \{n^p\alpha_1\}, \dots, \{n\alpha_q\}, \{n^2\alpha_q\}, \dots, \{n^p\alpha_q\}).
	\end{equation*}
	We prove the result by induction on $p$. By Lemma \ref{thm:different_alpha_sequence} the assertion holds for $p = 1$ and arbitrary $q \in \N$, which proves the base case.
	
	Now suppose that for all $\bm{b}' \in [0,1)^{(p-1)q}$ there exists $L \in \N$ such that at least one of $L$ consecutive elements of $(\bm{x}_n')\lc$, where
	\begin{equation*}
	\bm{x}'_n = (\{n\alpha_1\}, \{n^2\alpha_1\}, \dots, \{n^{p-1}\alpha_1\}, \dots, \{n\alpha_q\}, \{n^2\alpha_q\}, \dots, \{n^{p-1}\alpha_q\})
	\end{equation*} 
	is contained in $[\bm{0},\bm{b}')$.
	For the inductive step, we have to show that for all $\bm{b} \in [0,1)^{pq}$ there exists $K \in \N$ such that at least one of $K$ consecutive elements of $(\bm{x}_n)\lc$ lies in  $[\bm{0},\bm{b})$.
	
	We start by partitioning the interval $[0,1)^{pq}$ into $l^{pq}$ subintervals of volume $l^{-pq}$, i.e. intervals of the form
	\begin{equation*}
	\mathcal{I}(j_1^{(1)}, \dots, j_q^{(p)}) = \left[\frac{j_1^{(1)}}{l},\frac{j_1^{(1)}+1}{l}\right) \times \dots \times \left[\frac{j_q^{(p)}}{l},\frac{j_q^{(p)}+1}{l}\right),
	\end{equation*}
	with $j_{q'}^{(p')} \in \{0, 1, \dots, l-1\}$ for $q' \leq q$ and $p' \leq p$.
	Moreover, $l$ is chosen such that 
	\begin{equation*}
	b_1^{(1)}, \dots, b_q^{(p)} \geq \frac{2R + 2}{l}
	\end{equation*}
	with $R = \sum_{j = 1}^{p - 1}\binom{p}{j}$
	in order to guarantee that 
	\begin{equation}
	\label{eq:supset}
	[\bm{0}, \bm{b}) = [0,b_1^{(1)}) \times \dots \times [0,b_q^{(p)}) \supseteq \left[0,\frac{2R + 2}{l} \right)^{pq}.
	\end{equation}
	
	Since for each lattice point $\bm{h} \in \Z^{pq}, \bm{h} \neq 0$, the polynomial 
	\begin{equation*}
	\langle \bm{h}, (n\alpha_1, \dots, n^p\alpha_1, \dots, n\alpha_q, \dots, n^p\alpha_q) \rangle
	\end{equation*}
	has at least one non-constant term with irrational coefficient,
	the sequence $(\bm{x}_n)\lc$ is uniformly distributed (see e.g. Theorem 6.4 in \cite{Kuipers:Uniform_Distribution_of_Sequences}). Hence, each interval of the form $\mathcal{I}(j_1^{(1)}, \dots, j_q^{(p)})$ contains elements of $(\bm{x}_n)\lc$ and for each $\bm{j} = (j_1^{(1)}, \dots, j_q^{(p)}) \in \{0, 1, \dots, l-1\}^{pq}$, by $K_{\bm{j}}$ we denote the index of the first element of $(\bm{x}_n)\lc$ which lies in $\mathcal{I}(\bm{j})$.
	As before, define
	\begin{equation*}
	K_{\bm{\alpha},\bm{b}} = \max_{\bm{j} \in \{0,1, \dots, l-1\}^{pq}}K_{\bm{j}}
	\end{equation*}
	and note that $K_{\bm{\alpha},\bm{b}} > l^q$.
	By the induction hypothesis, the assertion of Lemma \ref{lma:combination} is true for $p -1$ and any $\bm{b}' \in [0,1)^{(p-1)q}$. We consider the special case $\bm{b}' = (K_{\alpha,\bm{b}}^{-p}, \dots, K_{\alpha,\bm{b}}^{-p}) \in \ui^{(p-1)q}$ and conclude that there exists $L = L(\alpha_1, \dots, \alpha_{q}, \bm{b})$ such that in any $L$ consecutive integers there is at least one integer $M$ with the property
	\begin{equation}
	\label{eq:2V_smaller_2}
	\|M^{p'}\alpha_{q'}\| \leq K_{\bm{\alpha},\bm{b}}^{-p},
	\end{equation}
	for all $q' \leq q$ and $p' \leq p-1$.
	Furthermore, define $K = L + K_{\bm{\alpha},\bm{b}}$ and and suppose that for some $N \in \N$ it holds that
	\begin{equation*}
	\label{eq:considered_points}
	\bm{x}_{n} \notin [\bm{0},\bm{b})
	\end{equation*}
	for $n = N+1, \dots, N+K$.
	More specifically, we consider the elements $\bm{x}_{M + k}$ for $k = 1, \dots, K_{\bm{\alpha},\bm{b}}$ where $N +1 \leq M \leq N + L$ and $M$ satisfies inequalities (\ref{eq:2V_smaller_2}).
	For the components of these elements with $p' = 1$ it holds that
	\begin{equation*}
	\{(M+k)\alpha_{q'}\} = \{\underbrace{M\alpha_{q'}}_{=: \xi_{q'}^{(1)}} + k\alpha_{q'}\} \notin [0,b_{q'}^{(1)}),
	\end{equation*}
	for all $q' \leq q$.
	This is equivalent to
	\begin{equation*}
	\{k\alpha_{q'}\} \notin [- \xi_{q'}^{(1)},b_{q'}^{(1)} - \xi_{q'}^{(1)}) \pmod 1.
	\end{equation*}
	For the components with $p' = 2$ we have
	\begin{equation*}
	\{(M+k)^2\alpha_{q'}\} = \{\underbrace{M^2\alpha_{q'}}_{=:\xi_{q'}^{(2)}} + 2Mk\alpha_{q'} + k^2\alpha_{q'}\} \notin [0,b_{q'}^{(2)}).
	\end{equation*}
	Since $\{2Mk\alpha_{q'} + k^2\alpha_{q'}\}$ is either $\{k^2\alpha_{q'}\} + \|2Mk\alpha_{q'}\|$ or $\{k^2\alpha_{q'}\} - \|2Mk\alpha_{q'}\|$ and with the estimate 
	\begin{align*}
	\|2Mk\alpha_{q'}\| &\leq \|M\alpha_{q'}\|2k \\
	& \leq \|M\alpha_{q'}\|2K_{\bm{\alpha},\bm{b}} \\
	& \leq \frac{2K_{\bm{\alpha},\bm{b}}}{K_{\bm{\alpha},\bm{b}}^p} < \frac{2}{l},
	\end{align*}
	we can conclude that
	\begin{equation*}
	\{k^2\alpha_{q'}\} \notin \left[- \xi_{q'}^{(2)} + \frac{2}{l}, b_{q'}^{(2)} - \xi_{q'}^{(2)} - \frac{2}{l}\right) \pmod 1.
	\end{equation*}
	We continue in this manner for all values of $p'$ until we arrive at
	\begin{equation*}
	\{(M+k)^p\alpha_{q'}\} = \{\underbrace{M^p\alpha_{q'}}_{=:\xi_{q'}^{(p)}} + \sum_{j = 1}^{p -1}\binom{p}{j}M^{p-j}k^j\alpha_{q'} + k^p\alpha_{q'}\} \notin [b_{q'}^{(p)},1)
	\end{equation*}
	for all $q' \leq q$.
	We apply the estimate
	\begin{align*}
	\|\binom{p}{j}M^{p-j}k^j\alpha_{q'}\| &\leq \|M^{p-j}\alpha_{q'}\|\binom{p}{j}k^j \\
	& \leq \|M^{p-j}\alpha_{q'}\|\binom{p}{j}K_{\bm{\alpha},\bm{b}}^{p-1} \\
	& \leq \binom{p}{j}K_{\bm{\alpha},\bm{b}}^{-1} \leq \binom{p}{j}l^{-1},
	\end{align*}
	for all $j = 1, \dots, p - 1$. Hence,
	\begin{equation*}
	\{k^p\alpha_{q'}\} \notin \left[- \xi_{q'}^{(p)} + \sum_{j = 1}^{p -1}\binom{p}{j}l^{-1},
	b_{q'}^{(p)} - \xi_{q'}^{(p)} - \sum_{j = 1}^{p -1}\binom{p}{j}l^{-1}\right) \pmod 1.
	\end{equation*}
	
	Finally, we use (\ref{eq:supset}) and obtain
	\begin{align*}
	\{k\alpha_{q'}\} &\notin [- \xi_{q'}^{(1)}, (2R+2)l^{-1} - \xi_{q'}^{(1)})  \pmod 1 =: \mathcal{I}_{q'}^{(1)}, \\
	\{k^2\alpha_{q'}\} &\notin \left[- \xi_{q'}^{(2)} + 2l^{-1}, (2R+2)l^{-1} - \xi_{q'}^{(2)} -2l^{-1}\right)  \pmod 1 =: \mathcal{I}_{q'}^{(2)}, \\
	&\vdots\\
	\{k^p\alpha_{q'}\} &\notin \left[- \xi_{q'}^{(p)} + \sum_{j = 1}^{p -1}\binom{p}{j}l^{-1},
	(2R+2)l^{-1} - \xi_{q'}^{(p)} - \sum_{j = 1}^{p -1}\binom{p}{j}l^{-1}\right) \pmod 1 =: \mathcal{I}_{q'}^{(p)}
	\end{align*}
	for all $k = 1, \dots, K_{\bm{\alpha},\bm{b}}$ and all $q' \leq q$.
	However, because of the definition of $R$ each of these one-dimensional intervals is of length greater or equal to $2/l$. The $pq$-dimensional interval $\mathcal{I}_1^{(1)} \times \dots \times \mathcal{I}_q^{(p)}$ therefore fully contains at least one of the intervals $\mathcal{I}(\bm{j})$. Since $K_{\bm{\alpha},\bm{b}}$ was constructed such that each interval $\mathcal{I}(\bm{j})$ at least contains one of the elements $(\bm{x}_n)_{1 \leq n \leq K_{\bm{\alpha},\bm{b}}}$ we have a contradiction. Therefore, for each $N$ at least one element of $(\bm{x}_{n})_{N+1 \leq n \leq N + K}$ lies in $[\bm{0},\bm{b})$.
\end{proof}

Finally, we are able to give a proof of Theorem \ref{thm:sn-brs_polynomial}. We will use the notation $\{(x^{(1)}, \dots, x^{(s)})\} = (\{x^{(1)}\}, \dots, \{x^{(s)}\})$.

\begin{refproof}[Proof of Theorem \ref{thm:sn-brs_polynomial}.]
	Let $\bm{p}(n)$ fulfill the assumptions of the theorem and let $q$ be the highest degree of the polynomials $p^{(i)}(n)$, $i = 1, \dots, s$. Then, each of these polynomials is of the form $p^{(i)}(n) = \alpha_0^{(i)} + n\alpha_1^{(i)} + \dots + n^q\alpha_q^{(i)}$ with coefficients $\alpha_{q'}^{(i)} \in \R$, $q' \leq q$. Furthermore, we split the set of coefficients into three subsets:
	\begin{align*}
	A_1 &= \{\alpha_{q'}^{(i)}: \alpha_{q'}^{(i)} \in \mathbb{Q}, i = 1, \dots, s, q' \leq q\},\\
	A_2 &= \{\alpha_{q'}^{(i)}: \alpha_{q'}^{(i)} \in \R \setminus \mathbb{Q}, i = 1, \dots, s, q' \leq q \text{ and all } \alpha_{q'}^{(i)} \text{ are linearly independent over } \mathbb{Q} \},\\
	A_3 &= \{\alpha_{q'}^{(i)}: \alpha_{q'}^{(i)} \in \R \setminus \mathbb{Q}, i = 1, \dots, s, q' \leq q \text{ and } \alpha_{q'}^{(i)} = \sum_{\alpha \in A_2} c(\alpha_{q'}^{(i)}, \alpha)\alpha\}.
	\end{align*}
	Note that this partition is not unique. Using these sets we can now define for each single $i$:
	\begin{align*}
	Q_1^{(i)} &= \{0 < q_1 \leq q: \alpha_{q_1}^{(i)} \in A_1 \}, \\
	Q_2^{(i)} &= \{0 < q_2 \leq q: \alpha_{q_2}^{(i)} \in A_2 \}, \\
	Q_3^{(i)} &= \{0 < q_3 \leq q: \alpha_{q_3}^{(i)} \in A_3 \},
	\end{align*}
	for all $i = 1, \dots, s$.
	For each $i$, the set $Q_2^{(i)} \cup Q_3^{(i)}$ contains at least one element. We have 
	\begin{equation*}
	p^{(i)}(n) = \underbrace{\sum_{q_1 \in Q_1^{(i)}} n^{q_1}\alpha_{q_1}^{(i)}}_{=: P_1^{(i)}} + 
	\underbrace{\sum_{q_2 \in Q_2^{(i)}} n^{q_2}\alpha_{q_2}^{(i)}}_{=: P_2^{(i)}} + 
	\underbrace{\sum_{q_3 \in Q_3^{(i)}} n^{q_3}\alpha_{q_3}^{(i)}}_{=: P_3^{(i)}} + \alpha_0^{(i)}.
	\end{equation*}
	First, we consider some interval $\abm \subset \ui^s$ and show that the maximal number of successive elements of $(\bm{p}(n))\lc$ which are not contained in $\abm$ is bounded.
	Therefore, let $D$ denote the least common multiple of the denominators of all $\alpha \in A_1$. We partition $[0,1)^s$ into intervals
	\begin{equation*}
	\mathcal{I}(\bm{j}) = \left[\frac{j_1}{l},\frac{j_1+1}{l}\right) \times \dots \times \left[\frac{j_s}{l},\frac{j_s+1}{l}\right),
	\end{equation*}
	with $\bm{j} = (j_1, \dots, j_s) \in \{0, 1, \dots, l-1\}^s$. The parameter $l$ is chosen such that 
	\begin{equation*}
	b^{(i)} - a^{(i)} \geq \frac{2 R^{(i)} + 2}{l},
	\end{equation*}
	for $i = 1, \dots, s$ in order to obtain
	\begin{equation}
	\label{eq:interval_contained}
	\abm \supseteq \left[\bm{a}, \bm{a} + \frac{2 R^{(i)} + 2}{l}\right).
	\end{equation}
	The constants $R^{(i)}$ are defined as
	\begin{equation*}
	R^{(i)} := \sum_{q_2 \in Q_2^{(i)}}\sum_{j = 1}^{q_2-1}\binom{q_2}{j}D^j + 
	\sum_{q_3 \in Q_3^{(i)}}\sum_{j = 1}^{q_3-1}\binom{q_3}{j}D^j\sum_{\alpha \in A_2}c(\alpha_{q_3}^{(i)},\alpha).
	\end{equation*}
	Consider now the sequence $(\{P_2^{(1)}(Dn) + P_3^{(1)}(Dn)\}, \dots, \{P_2^{(s)}(Dn) + P_3^{(s)}(Dn)\})\lc$ which, by the assumption of the theorem, is uniformly distributed. Therefore, each of the intervals $\mathcal{I}(\bm{j})$ contains points of the sequence and by $K_{\bm{j}}$ we denote the index of the first element in the respective interval. As before, we define 
	\begin{equation*}
	K_{\bm{\alpha}, \bm{b}} = \max_{\bm{j} \in \{0,1,\dots,l-1\}^s}K_{\bm{j}}.
	\end{equation*}
	Now consider the sequence whose components are given by the sequences $n^{q'}\alpha$ with $\alpha \in A_2$ and $q' \leq q$. Since all $\alpha \in A_2$ are linearly independent, it follows from Lemma \ref{lma:combination} that there exists $L \in \N$ such that any $L$ integers contain at least one integer $M$ which fulfills
	\begin{equation}
	\label{eq:inequality_p(n)}
	\| M^{q'}\alpha\| < K_{\bm{\alpha}, \bm{b}}^{-q}
	\end{equation}
	for all $q' \leq q$, $\alpha \in A_2$.
	We use this $L$ to define $K = L + DK_{\bm{\alpha},\bm{b}}$ and show that for each $N \in \N$ at least one of the elements 
	\begin{equation*}
	\{\bm{p}(N + 1)\}, \dots, \{\bm{p}(N + K)\}
	\end{equation*}
	lies in $\abm$, i.e. the number of consecutive elements of $(\bm{p}(n))\lc$ which are not contained in $\abm$ is bounded by $K$.
	The elements $\{\bm{p}(N + 1)\}, \dots, \{\bm{p}(N + L)\}$ contain an element $\{\bm{p}(M)\}$, where $M$ satisfies (\ref{eq:inequality_p(n)}). Therefore, we particularly consider the elements $\{\bm{p}(M + Dk)\}$ for $k = 1, \dots, K_{\bm{\alpha},\bm{b}}$. Suppose that
	\begin{equation}
	\label{eq:assertion}
	\{\bm{p}(M + Dk)\} \notin \abm
	\end{equation}
	for all $k = 1, \dots, K_{\bm{\alpha},\bm{b}}$ and use
	\begin{equation*}
	p^{(i)}(M + Dk)
	= P_1^{(i)}(M + Dk) + P_2^{(i)}(M + Dk) + P_3^{(i)}(M + Dk) + \alpha_0^{(i)}.
	\end{equation*}
	Since $D$ is the least common multiple of the denominators of $\alpha \in A_1$, it holds that
	\begin{equation*}
	P_1^{(i)}(M + Dk) = P_1^{(i)}(M) + z
	\end{equation*}
	for some $z \in \Z$.
	For $P_2^{(i)}$ we have
	\begin{align*}
	P_2^{(i)}(M + Dk) 
	&= \sum_{q_2 \in Q_2^{(i)}}(M + Dk)^{q_2}\alpha_{q_2}^{(i)} \\
	&= \underbrace{\sum_{q_2 \in Q_2^{(i)}} M^{q_2}\alpha_{q_2}^{(i)}}_{ = P_2^{(i)}(M)} + 
	\sum_{q_2 \in Q_2^{(i)}}\sum_{j = 1}^{q_2-1}\binom{q_2}{j} M^{q_2-j}(Dk)^j\alpha_{q_2}^{(i)} + 
	\underbrace{\sum_{q_2 \in Q_2^{(i)}}(Dk)^{q_2}\alpha_{q_2}^{(i)}}_{= P_2^{(i)}(Dk)}.
	\end{align*}
	Similarly, for $P_3^{(i)}$ we obtain
	\begin{align*}
	P_3^{(i)}(M + Dk) 
	&= \underbrace{\sum_{q_3 \in Q_3^{(i)}} M^{q_3}\alpha_{q_3}^{(i)}}_{ = P_3^{(i)}(M)} + 
	\sum_{q_3 \in Q_3^{(i)}}\sum_{j = 1}^{q_3-1}\binom{q_3}{j} M^{q_3-j}(Dk)^j\alpha_{q_3}^{(i)} + 
	\underbrace{\sum_{q_3 \in Q_3^{(i)}}(Dk)^{q_3}\alpha_{q_3}^{(i)}}_{= P_3^{(i)}(Dk)},
	\end{align*}
	where the second term can be written as
	\begin{equation*}
	\sum_{q_3 \in Q_3^{(i)}}\sum_{j = 1}^{q_3-1}\binom{q_3}{j} \sum_{\alpha \in A_2} c(\alpha_{q_3}^{(i)},\alpha)M^{q_3-j}(Dk)^j\alpha.
	\end{equation*}
	By the inequalities in (\ref{eq:inequality_p(n)}) it holds that
	\begin{equation*}
	\| M^{q'-j}k^j\alpha \| \leq \| M^{q'-j}\alpha \| K_{\alpha,\bm{b}}^{q} < l^{-1},
	\end{equation*}
	for all $q' \leq q$, $j \leq q'$ and $\alpha \in A_2$.
	Using the fact that $\{x\} - c\| y \| \leq  \{ x + cy \} \leq \{x\} + c\| y \|$, where $c$ is a constant,
	the definition of $R^{(i)}$, (\ref{eq:interval_contained}) and (\ref{eq:assertion}), we obtain with $\xi^{(i)} = P_1^{(i)}(M) + P_2^{(i)}(M) + P_3^{(i)}(M) + \alpha_0^{(i)}$  that
	\begin{equation*}
	\{P_2^{(i)}(Dk) + P_3^{(i)}(Dk) \} \notin [a^{(i)} - \xi^{(i)} + R^{(i)}l^{-1}, a^{(i)} + (2R^{(i)} + 2)l^{-1} - \xi^{(i)} - R^{(i)}l^{-1}) \pmod 1
	\end{equation*}
	for all $ i = 1, \dots s$ and all $k = 1, \dots, K_{\bm{\alpha},\bm{b}}$.
	The length of each of those intervals is $2/l$, therefore, the resulting $s$-dimensional interval covers at least one of the intervals $\mathcal{I}(\bm{j})$. Since $K_{\bm{\alpha},\bm{b}}$ was chosen such that any $\mathcal{I}(\bm{j})$ contains at least one element of 
	$(\{P_2^{(1)}(Dk) + P_3^{(1)}(Dk)\}, \dots, \{P_2^{(s)}(Dk) + P_3^{(s)}(Dk)\})_{1\leq k \leq K_{\bm{\alpha},\bm{b}}}$ this is a contradiction to the construction of $K_{\bm{\alpha},\bm{b}}$.
	Thus, the maximal number of successive elements of $(\{\bm{p}(n)\})\lc$ which are not contained in some interval $\abm \subset \ui^s$ is bounded by some constant $K$. Obviously, the same holds for the number of successive elements of $(\{\bm{p}(n)\})\lc$ in $\ui^s \setminus \abm$.
	
	In order to prove that the sequence $(\{\bm{p}(n)\})\lc$ does not have any S-NBRS, it remains to show that the number of successive elements in $\abm \subset \ui^s$ is also bounded. This can be easily seen since either
	\begin{equation*}
	\abm \subseteq [\bm{0},\bm{b}) \subset \ui^s \setminus [\bm{b},\bm{1}),
	\end{equation*}
	or
	\begin{equation*}
	\abm \subseteq [\bm{a},\bm{1}) \subset \ui^s \setminus [\bm{0},\bm{a}).
	\end{equation*}
	We already know that the number of consecutive elements in $\ui^s \setminus [\bm{b},\bm{1})$ as well as in $\ui^s \setminus [\bm{0},\bm{a})$ is bounded, therefore this also holds for the interval $\abm$.
\end{refproof}

%
%
%
%
%
%

\bibliographystyle{plain}
\bibliography{literatur}

\textbf{Author’s Addresses:} \\ 
Lisa Kaltenböck and Gerhard Larcher, Institut für Finanzmathematik und Angewandte Zahlentheorie, Johannes Kepler Universität Linz, Altenbergerstraße 69, A-4040 Linz, Austria. \\ \\ 
Email: lisa.kaltenboeck(at)jku.at, gerhard.larcher(at)jku.at 
\end{document}